\newtheorem{theorem}{Theorem}
\newtheorem{lemma}{Lemma}
\newtheorem{prop}{Proposition}
\newtheorem{rem}{Remark}
\newproof{proof}{Proof}
\journal{Journal of \LaTeX\ Templates}
\begin{document}

\begin{frontmatter}

\title{Orthogonal foliations on riemannian manifolds}


\author[mymainaddress]{Andr\'e de Oliveira Gomes\corref{mycorrespondingauthor}}
\cortext[mycorrespondingauthor]{Corresponding author}
\ead{gomes@ime.usp.br}

\author[mysecondaryaddress]{Eur\'{\i}pedes Carvalho da Silva}

\address[mymainaddress]{Departmento de Matem\'atica, Universidade de S\~ao Paulo, 05508-090 S\~ao Paulo, SP, Brazil}
\address[mysecondaryaddress]{Instituto Federal de Ci\^encia e Tecnologia do Cear\'a, 61.939-140 Maracana\'u, CE, Brazil }

\begin{abstract}
In this work, we find an equation that relates the Ricci curvature of a riemannian manifold $M$ and the second fundamental forms of two orthogonal foliations of complementary dimensions, $\mathcal{F}$ and $\mathcal{F}^{\bot}$, defined on $M$. Using this equation, we show a sufficient condition for the manifold M to be locally a riemannian product of the leaves of $\mathcal{F}$ and $\mathcal{F}^{\bot}$, if one of the foliations is totally umbilical. We also prove an integral formula for such foliations.
\end{abstract}

\begin{keyword}
Totally umbilical foliation, mean curvature vector, integral formula
\MSC[2010] 53C12
\end{keyword}

\end{frontmatter}

\section{Introduction}
A great motivation for this work is a result of Brito and Walczak \cite{Brito1985410} about a pair of two orthogonal foliations of complementary dimensions, $\mathcal{F}$ and $\mathcal{F}^{\bot}$, defined on a complete Riemannian manifold $M$. These authors shown that if the foliation $\mathcal{F}^{\bot}$ is \textit{totally geodesic} (i.e. each leaf of $\mathcal{F}^{\bot}$ is a totally geodesic submanifold of $M$) and the Ricci curvature of the ambient manifold $M$ is not negative, then $M$ is a locally riemannian product of the leaves of $\mathcal{F}$ and $\mathcal{F}^{\bot}$. This generalizes an analogous result proved by Abe \cite{Abe1973425} using the additional hypothesis of local symmetry of the ambient space $M$. The Brito-Walczak's result was proved using an equation that relates the Ricci curvature of the riemannian manifold $M$ and the second fundamental form of the foliation $\mathcal{F}$.
\par In this work, we find a general equation that relates the Ricci curvature of a riemannian manifold $M$ and the second fundamental forms of two orthogonal foliations of complementary dimensions, $\mathcal{F}$ and $\mathcal{F}^{\bot}$, defined on M. Using this equation, we generalize a theorem of Almeida, Brito and Colares \cite{Almeida2017573} about such foliations in which all leaves of $\mathcal{F}^{\bot}$ are totally umbilical submanifolds of $M$ (the foliation $\mathcal{F}^{\bot}$ is then said a \textit{totally umbilical} foliation). Using the same equation, we also find a more general integral formula than that obtained by Walczak \cite{Walczak1990}.

\par The structure of this paper is as follows. In section 2 we present the main definitions and notations used in rest of the paper. In section 3 we prove the general equation cited above. In section 4 we analyze the case when the foliation $\mathcal{F}^{\bot}$ is totally umbilical and we prove a generalization of the theorem 1 of \cite{Almeida2017573}. Finally, in section 5 we establish the general integral formula.

\section{The basic terminology}

\par Let $M$ be a riemannian manifold of dimension $n+p$. The symbols $\left\langle \ , \ \right\rangle$, $\nabla$ and $\mathrm{R}$ denote, respectively, the riemannian metric, the riemannian connexion and the curvature tensor of $M$. Let $\mathcal{F}$ be a $\mathcal{C}^{\infty}$-foliation of codimension $p$ on the manifold $M$ and let $\mathcal{F}^{\bot}$ be a $\mathcal{C}^{\infty}$-foliation of codimension $n$ on the manifold $M$ and  orthogonal to $\mathcal{F}$. Let $x\in M$ and $\left\{e_{1},\ldots,e_{n},e_{n+1},\ldots,e_{n+p}\right\}$ be a \textit{orthonormal adapted frame} (i.e. $e_{1},\ldots,e_{n}$ are tangent to $\mathcal{F}$ and $e_{n+1},\ldots,e_{n+p}$ are tangent to $\mathcal{F}^{\bot}$) in a neighborhood of $x$. We shall make use of the following convention on the range of indices:
\begin{eqnarray*}
	1\leq A,B,\ldots\!\!\! &\leq &\!\!\! n+p\\
	1\leq i,j,\ldots\!\!\! &\leq & \!\!\! n\\
	n+1 \leq \alpha,\beta,\ldots\!\!\! &\leq &\!\!\! n+p
\end{eqnarray*} 

\par We define the \textit{second fundamental form of $\mathcal{F}$ in the direction of $e_{\alpha}$} by
\begin{eqnarray*}
	H_{_\mathcal{F}}^{\alpha}(e_{i},e_{j})=\left\langle -\nabla_{e_{i}}e_{\alpha},e_j\right\rangle
\end{eqnarray*}
and we define the \textit{second fundamental form of $\mathcal{F^{\bot}}$ in the direction of $e_{i}$} by
\begin{eqnarray*}
	H_{\mathcal{F^{\bot}}}^{i}(e_{\alpha},e_{\beta})=\left\langle -\nabla_{e_{\alpha}}e_{i},e_{\beta}\right\rangle
\end{eqnarray*}

\par Let $X$ be a smooth vector field defined on the manifold $M$. We denote by $X^{\bot}$ and $X^{\top}$, respectively, the following smooth vector fields
\begin{eqnarray*}
	X^{\bot}\!\!\!&=&\!\!\!\sum\limits_{\alpha}\left\langle X,e_{\alpha}\right\rangle e_{\alpha}\\
	X^{\top}\!\!\!&=&\!\!\!\sum\limits_{i}\left\langle X,e_{i}\right\rangle e_{i}
\end{eqnarray*}

\par The \textit{Weingarten operators} of $H_{\mathcal{F}}^{\alpha}$ and $H_{\mathcal{F}^{\bot}}^{i}$ are given, respectively,  by
\begin{eqnarray*}
	A_{e_{\alpha}}(e_{i})=-\left(\nabla_{e_{i}}e_{\alpha}\right)^{\top}\ \ \ \ \mathrm{and} \ \ \ \ A_{e_{i}}(e_{\alpha})=-\left(\nabla_{e_{\alpha}}e_{i}\right)^{\bot}
\end{eqnarray*}
\par We define the \textit{norm of the second fundamental form $H_{\mathcal{F}}^{\alpha}$} by
\begin{eqnarray*}
	\left\|H_{\mathcal{F}}^{\alpha}\right\|=\left(\sum\limits_{i,j}\left\langle -\nabla_{e_{i}}e_{\alpha},e_j\right\rangle^{2}\right)^{1/2}
\end{eqnarray*}
and, analogously, we define the \textit{norm of the second fundamental form $H_{\mathcal{F^{\bot}}}^{i}$} by
\begin{eqnarray*}
	\left\|H_{\mathcal{F}^{\bot}}^{i}\right\|=\left(\sum\limits_{\alpha,\beta}\left\langle -\nabla_{e_{\alpha}}e_{i},e_{\beta}\right\rangle^{2}\right)^{1/2}
\end{eqnarray*}

\par The \textit{mean curvature vector of $\mathcal{F}$} is defined by
\begin{eqnarray*}
	h=\sum\limits_{i}\left(\nabla_{e_{i}}e_{i}\right)^{\bot}
\end{eqnarray*}
and the \textit{mean curvature vector of $\mathcal{F}^{\bot}$} is defined by
\begin{eqnarray*}
	h^{\bot}=\sum\limits_{\alpha}\left(\nabla_{e_{\alpha}}e_{\alpha}\right)^{\top}
\end{eqnarray*}

\par For each fixed $\alpha$, we denote by $(K^{\alpha}_{ij})$ the $n\times n$ matrix with entries given by $R(e_{\alpha},e_{i},e_{j},e_{\alpha})$. The \textit{trace} of the matrix $(K_{ij}^{\alpha})$ is then given by
\begin{eqnarray*}
	\mathrm{Tr}(K^{\alpha}_{ij})=\sum\limits_{i}R(e_{\alpha},e_{i},e_{i},e_{\alpha})
\end{eqnarray*}

\par For all smooth vector field $X$ defined on $M$ we have the following definitions
\begin{eqnarray*}
	\mathrm{div}_{\mathcal{F}}(X)\!\!&=&\!\!\sum\limits_{i}\left\langle e_{i}, \nabla_{e_{i}}X\right\rangle\\
	\mathrm{div}_{\mathcal{F^{\bot}}}(X)\!\!&=&\!\!\!\sum\limits_{\alpha}\left\langle e_{\alpha}, \nabla_{e_{\alpha}}X\right\rangle
\end{eqnarray*}

\section{The main equation}

\begin{theorem}
Let $M$ be a riemannian manifold and denote by $\mathcal{F}$ and $\mathcal{F}^{\bot}$ two orthogonal foliations of complementary dimensions on $M$. Then we have
\begin{eqnarray*}
e_{\alpha}\left\langle h,e_{\alpha}\right\rangle-\left\|H_{_\mathcal{F}}^{^\alpha}\right\|^2-\mathrm{Tr}(K^{\alpha}_{ij})=\sum\limits_{i=1}^{n}H_{\mathcal{F^{\bot}}}^{^i}(e_{\alpha},\nabla^{\bot}_{e_{i}}e_{\alpha}-[e_{\alpha},e_{i}]^{\bot})-\mathrm{div}_{\mathcal{F}}(\nabla_{e_{\alpha}}e_{\alpha})
\end{eqnarray*}
\end{theorem}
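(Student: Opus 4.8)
The plan is to start from the curvature term and rewrite it by repeated use of the metric compatibility and torsion-freeness of $\nabla$, rather than trying to massage the right-hand side. Fixing $\alpha$ and adopting the convention $\mathrm{R}(X,Y)Z=\nabla_X\nabla_Y Z-\nabla_Y\nabla_X Z-\nabla_{[X,Y]}Z$, I would write $\mathrm{Tr}(K^\alpha_{ij})=\sum_i\langle \mathrm{R}(e_\alpha,e_i)e_i,e_\alpha\rangle$ and split it into the three pieces $T_1=\sum_i\langle\nabla_{e_\alpha}\nabla_{e_i}e_i,e_\alpha\rangle$, $T_2=-\sum_i\langle\nabla_{e_i}\nabla_{e_\alpha}e_i,e_\alpha\rangle$ and $T_3=-\sum_i\langle\nabla_{[e_\alpha,e_i]}e_i,e_\alpha\rangle$. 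The whole argument is then the independent simplification of these three terms, followed by their recombination.

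For $T_1$ I would use $\langle h,e_\alpha\rangle=\sum_i\langle\nabla_{e_i}e_i,e_\alpha\rangle$, which holds because $e_\alpha$ is tangent to $\mathcal F^\bot$, and differentiate: metric compatibility gives $e_\alpha\langle h,e_\alpha\rangle=T_1+\sum_i\langle\nabla_{e_i}e_i,\nabla_{e_\alpha}e_\alpha\rangle$, so $T_1$ produces the desired term $e_\alpha\langle h,e_\alpha\rangle$ together with an auxiliary term. For $T_2$ the key device is the orthogonality relation $\langle e_i,e_\alpha\rangle\equiv 0$, which upon differentiation in the $e_\alpha$ and $e_i$ directions yields $\langle\nabla_{e_\alpha}e_i,e_\alpha\rangle=-\langle e_i,\nabla_{e_\alpha}e_\alpha\rangle$; applying the product rule twice then converts $T_2$ into the sum of $\mathrm{div}_{\mathcal F}(\nabla_{e_\alpha}e_\alpha)$, the auxiliary term $\sum_i\langle\nabla_{e_i}e_i,\nabla_{e_\alpha}e_\alpha\rangle$ that cancels the one coming from $T_1$, and the cross term $\sum_i\langle\nabla_{e_\alpha}e_i,\nabla_{e_i}e_\alpha\rangle$.

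The delicate step is $T_3$. Here I would expand $[e_\alpha,e_i]=\nabla_{e_\alpha}e_i-\nabla_{e_i}e_\alpha$ in the adapted frame and use that, because both distributions are integrable and $\nabla$ is torsion-free, the second fundamental forms are symmetric: $\langle\nabla_{e_i}e_\alpha,e_j\rangle=\langle\nabla_{e_j}e_\alpha,e_i\rangle$ and $\langle\nabla_{e_\beta}e_i,e_\alpha\rangle=\langle\nabla_{e_\alpha}e_i,e_\beta\rangle$. After expressing $\langle\nabla_{e_j}e_i,e_\alpha\rangle$ and $\langle\nabla_{e_\beta}e_i,e_\alpha\rangle$ through these identities, $T_3$ becomes a sum of quadratic expressions in the tangential and normal components of $\nabla_{e_\alpha}e_i$ and $\nabla_{e_i}e_\alpha$.

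Finally I would collect $T_1+T_2+T_3$ and isolate the stated quantities. The normal components reassemble into $\|H_{\mathcal F}^\alpha\|^2$ and into the mixed term $\sum_i H_{\mathcal F^\bot}^i(e_\alpha,\nabla^\bot_{e_i}e_\alpha-[e_\alpha,e_i]^\bot)$, once one writes $[e_\alpha,e_i]^\bot=(\nabla_{e_\alpha}e_i)^\bot-(\nabla_{e_i}e_\alpha)^\bot$ and expands $H_{\mathcal F^\bot}^i$ by its definition. The step I expect to be the main obstacle is the bookkeeping of tangential versus normal components in $T_3$: the proof closes only because the purely tangential cross terms $\sum_{i,j}\langle\nabla_{e_\alpha}e_i,e_j\rangle\langle\nabla_{e_i}e_\alpha,e_j\rangle$ cancel, and this cancellation is forced by pairing the matrix $\langle\nabla_{e_\alpha}e_i,e_j\rangle$, which is antisymmetric in $(i,j)$ because $\langle e_i,e_j\rangle\equiv\delta_{ij}$, against the symmetric matrix $\langle\nabla_{e_i}e_\alpha,e_j\rangle$. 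Verifying this antisymmetry–symmetry pairing is the crux that makes the remaining normal-component terms match the right-hand side.
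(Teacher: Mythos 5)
Your proposal is correct, and its skeleton coincides with the paper's: the same splitting of $\mathrm{Tr}(K^{\alpha}_{ij})$ into the three pieces $T_1,T_2,T_3$, the same production of $e_{\alpha}\left\langle h,e_{\alpha}\right\rangle$ from $T_1$, and the same double differentiation of $\left\langle e_i,e_{\alpha}\right\rangle\equiv 0$ to convert $T_2$ into $\mathrm{div}_{\mathcal{F}}(\nabla_{e_{\alpha}}e_{\alpha})$ plus the cross term (the paper's equation (7)). Where you genuinely diverge is the mechanism for the crucial tangential cancellation: the paper assumes the frame $\left\{e_1,\ldots,e_n\right\}$ diagonalizes $H_{\mathcal{F}}^{\alpha}$, so that $\left(\nabla_{e_i}e_{\alpha}\right)^{\top}=-\lambda_i^{\alpha}e_i$ and the unwanted tangential contributions vanish through $b_{ii}=\left\langle\nabla_{e_{\alpha}}e_i,e_i\right\rangle=0$, yielding $\sum_i(\lambda_i^{\alpha})^2=\left\|H_{\mathcal{F}}^{\alpha}\right\|^2$; you instead work in an arbitrary adapted orthonormal frame and kill the same terms by pairing $\left\langle\nabla_{e_{\alpha}}e_i,e_j\right\rangle$, antisymmetric in $(i,j)$ by metric compatibility and orthonormality, against $\left\langle\nabla_{e_i}e_{\alpha},e_j\right\rangle$, symmetric by integrability of $\mathcal{F}$ and torsion-freeness. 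Both facts are correct, the full contraction of an antisymmetric with a symmetric matrix indeed vanishes, and carrying your bookkeeping through reproduces exactly the paper's intermediate identities (its equations (5) and (6)) with $\sum_i(\lambda_i^{\alpha})^2$ replaced by $\left\|H_{\mathcal{F}}^{\alpha}\right\|^2$; note you also tacitly use the symmetry of $H^{i}_{\mathcal{F}^{\bot}}$, which requires integrability of $\mathcal{F}^{\bot}$, to merge $H^{i}_{\mathcal{F}^{\bot}}([e_{\alpha},e_i]^{\bot},e_{\alpha})$ into the stated single expression. What your route buys is that the identity is established verbatim for every adapted frame, matching the theorem as stated, whereas the paper proves it only in the pointwise eigenframe of $H_{\mathcal{F}}^{\alpha}$ and implicitly relies on the frame-independence of each side to conclude in general; what the paper's route buys is that the eigenvalues $\lambda_i^{\alpha}$ introduced there are reused directly in the umbilical case (Theorem 2 and equation (9)), so its frame choice is an investment for the later sections rather than a necessity here.
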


\begin{proof}
Let us take $\alpha$. We observe that
\begin{eqnarray}
\nonumber e_{\alpha}\langle h,e_{\alpha}\rangle-\left\|H_{\mathcal{F}}^{\alpha}\right\|^2-\sum_{i=1}^{n}{R(e_{\alpha},e_i,e_i,e_{\alpha})}\\
\nonumber & \hspace{-12cm} =  & \hspace{-6cm} e_{\alpha}\langle \sum_{i=1}^{n}{\nabla_{e_i}e_i},e_{\alpha}\rangle-\left\|H_{\mathcal{F}}^{\alpha}\right\|^2-\sum_{i=1}^{n}{R(e_{\alpha},e_i,e_i,e_{\alpha})}\\
\nonumber & \hspace{-12cm} =  & \hspace{-6cm}\sum_{i=1}^{n}{(\langle\nabla_{e_{\alpha}}\nabla_{e_i}e_i,e_{\alpha}\rangle + \langle \nabla_{e_i}e_i,\nabla_{e_{\alpha}}e_{\alpha}\rangle)}-\left\|H_{\mathcal{F}}^{\alpha}\right\|^2-\sum_{i=1}^{n}{R(e_{\alpha},e_i,e_i,e_{\alpha})}\\
\nonumber & \hspace{-12cm} = & \hspace{-6cm}\sum_{i=1}^{n}{(\langle\nabla_{e_{\alpha}}\nabla_{e_i}e_i,e_{\alpha}\rangle 
  + \langle \nabla_{e_i}e_i,\nabla_{e_{\alpha}}e_{\alpha}\rangle)}
	-\left\|H_{\mathcal{F}}^{\alpha}\right\|^2\\
\nonumber	& \hspace{-12cm} - & \hspace{-6cm} \sum_{i=1}^{n}{(\langle\nabla_{e_{\alpha}}\nabla_{e_i}e_i}
	-\nabla_{e_i}\nabla_{e_{\alpha}}e_i-\nabla_{[e_{\alpha},e_i]}e_i,e_{\alpha}\rangle) \\
\nonumber & \hspace{-12cm} = & \hspace{-6cm}\sum_{i=1}^{n}{\langle \nabla_{e_i}e_i,\nabla_{e_{\alpha}}e_{\alpha}\rangle}-\left\|H_{\mathcal{F}}^{\alpha}\right\|^2+\sum_{i=1}^{n}\langle\nabla_{e_i}\nabla_{e_{\alpha}}e_i,e_{\alpha}\rangle\\
& \hspace{-12cm} + & \hspace{-6cm} \sum_{i=1}^{n}{\langle\nabla_{[e_{\alpha},e_i]}e_i,e_{\alpha}\rangle}
\end{eqnarray}
On the other hand
\begin{equation}
\nabla_{e_{\alpha}}e_{i}=\sum_{j=1}^n{b_{ij}e_j}+\sum_{\beta}{b_{i\beta}e_{\beta}},
\label{eqthree} 
\end{equation}
where $b_{iA}=\left\langle \nabla_{e_{\alpha}}e_i,e_A\right\rangle$. We must have $b_{ii}=0$, since 
\begin{eqnarray*}
	0=e_{\alpha}\left\langle e_i,e_i\right\rangle=2\left\langle \nabla_{e_{\alpha}}e_i,e_i\right\rangle=2b_{ii} 
\end{eqnarray*}
We assume that the basis $\left\{e_{1},\ldots,e_{n}\right\}$ diagonalizes the operator $H_{\mathcal{F}}^{^\alpha}$. Then
\begin{equation}
\nabla_{e_i}e_{\alpha}=-\lambda_i^{\alpha}e_i+\sum_{\beta=n+1}^{n+p}{c_{\beta}e_{\beta}}.
\label{eqfour}
\end{equation}
where $c_{B}=\left\langle \nabla_{e_{i}}e_{\alpha},e_{B}\right\rangle$, and we have 
\begin{eqnarray}
	\left\langle \nabla_{e_i}e_{\alpha},e_j\right\rangle=-H_{\mathcal{F}}^{^\alpha}(e_{i},e_{j})=-\lambda_i^{\alpha}\delta_{ij}
\end{eqnarray}
Using $(2)$, $(3)$ and $(4)$
\begin{eqnarray*}
\left\langle \nabla_{[e_{\alpha},e_{i}]}e_i,e_{\alpha}\right\rangle & = & \left\langle \nabla_{\nabla_{e_{\alpha}}e_i}e_i,e_{\alpha}\right\rangle-\left\langle \nabla_{\nabla_{e_{i}}e_{\alpha}}e_i,e_{\alpha}\right\rangle\\
& = & \left\langle \nabla_{\sum\limits_j{b_{ij}e_j}+\sum\limits_\beta{b_{i\beta}e_{\beta}}}e_i,e_{\alpha}\right\rangle-\left\langle \nabla_{-\lambda_{i}^{\alpha}e_{i}+\sum\limits_\beta{c_{\beta}e_{\beta}}}e_i,e_{\alpha}\right\rangle\\
& = & \sum_j{b_{ij}\left\langle \nabla_{e_j}e_i,e_{\alpha}\right\rangle}+\sum_\beta{b_{i\beta}\left\langle \nabla_{e_{\beta}}e_i,e_{\alpha}\right\rangle}\\
& + & \lambda_{i}^{\alpha}\left\langle \nabla_{e_i}e_i,e_{\alpha}\right\rangle-\sum_{\beta}{c_{\beta}\left\langle \nabla_{e_{\beta}}e_i,e_{\alpha}\right\rangle}\\
& = & \sum_j{b_{ij}\lambda_{j}^{\alpha}\delta_{ij}}+\sum_{\beta}{(b_{i\beta}-c_{\beta})\left\langle \nabla_{e_{\beta}}e_i,e_{\alpha}\right\rangle}+(\lambda_{i}^{\alpha})^2.
\end{eqnarray*}
We now observe that
\begin{eqnarray*}
	\sum_j{b_{ij}\lambda_{j}^{\alpha}\delta_{ij}}\!\! & = &\!\! 0 \ \ \ \  (\mathrm{because}\ \  b_{ii}=0)
	\\
	b_{i\beta}-c_{\beta}\!\! & = & \!\!\left\langle [e_{\alpha},e_i],e_{\beta}\right\rangle
	\\
	H^{^i}_{\mathcal{F}^{\bot}}(e_{\beta},e_{\alpha})\!\!& = &\!\! -\left\langle \nabla_{e_{\beta}}e_i,e_{\alpha}\right\rangle
\end{eqnarray*}
and then
\begin{eqnarray}
\nonumber\left\langle \nabla_{[e_{\alpha},e_{i}]}e_i,e_{\alpha}\right\rangle & = & -\sum_{\beta}{\left\langle [e_{\alpha},e_i],e_{\beta}\right\rangle H^{^i}_{\mathcal{F}^{\bot}}(e_{\beta},e_{\alpha})}+(\lambda_{i}^{\alpha})^2\\
\nonumber& = & -\sum_{\beta}{H^{^i}_{\mathcal{F}^{\bot}}(\left\langle [e_{\alpha},e_i],e_{\beta}\right\rangle e_{\beta},e_{\alpha})}+(\lambda_{i}^{\alpha})^2\\
\nonumber& = & -H^{^i}_{\mathcal{F}^{\bot}}(\sum_{\beta}{\left\langle [e_{\alpha},e_i],e_{\beta}\right\rangle e_{\beta}},e_{\alpha})+(\lambda_{i}^{\alpha})^2\\
& = & -H^{^i}_{\mathcal{F}^{\bot}}([e_{\alpha},e_{i}]^{\bot},e_{\alpha})+(\lambda_{i}^{\alpha})^2
\end{eqnarray}
By (1) and (5) we have
\begin{eqnarray}
	\nonumber e_{\alpha}\left\langle h,e_{\alpha}\right\rangle-\left\|H_{\mathcal{F}}^{\alpha}\right\|^2-\sum_{i=1}^{n}{R(e_{\alpha},e_i,e_i,e_{\alpha})}\\
	& \hspace{-12cm} = & \hspace{-6cm}\sum_{i=1}^n{\left\langle \nabla_{e_i}e_i,\nabla_{e_{\alpha}}e_{\alpha}\right\rangle}-\sum_{i=1}^n{H^{^i}_{\mathcal{F}^{\bot}}([e_{\alpha},e_{i}]^{\bot},e_{\alpha})} +\sum_{i=1}^{n}{\left\langle \nabla_{e_i}\nabla_{e_{\alpha}}e_{i},e_{\alpha}\right\rangle}
\end{eqnarray}

The equality $e_{\alpha}\left\langle e_i,e_{\alpha}\right\rangle=0$ implies $ \left\langle \nabla_{e_{\alpha}}e_i,e_{\alpha}\right\rangle+\left\langle e_i,\nabla_{e_{\alpha}}e_{\alpha}\right\rangle=0$ and then
\begin{eqnarray}
	\left\langle \nabla_{e_i}\nabla_{e_{\alpha}}e_i,e_{\alpha}\right\rangle=-\left\langle \nabla_{e_{\alpha}}e_i,\nabla_{e_i}e_{\alpha}\right\rangle-\left\langle \nabla_{e_i}e_i,\nabla_{e_{\alpha}}e_{\alpha}\right\rangle-\left\langle e_i,\nabla_{e_i}\nabla_{e_{\alpha}}e_{\alpha}\right\rangle
\end{eqnarray}
Finally, using (6) and (7)
\begin{eqnarray*}
e_{\alpha}\left\langle h,e_{\alpha}\right\rangle-\left\|H_{\mathcal{F}}^{\alpha}\right\|^2-\sum\limits_i{R(e_{\alpha},e_i,e_i,e_{\alpha})}\\
	\\
& \hspace{-12cm}	= & \hspace{-6cm}\sum_{i=1}^n{\left\langle \nabla_{e_i}e_i,\nabla_{e_{\alpha}}e_{\alpha}\right\rangle}- \sum\limits_i{H^{i}_{\mathcal{F}^{\bot}}([e_{\alpha},e_{i}]^{\bot},e_{\alpha})} + \sum\limits_i{\left\langle \nabla_{e_i}\nabla_{e_{\alpha}}e_{i},e_{\alpha}\right\rangle}\\
	\\
&	\hspace{-12cm} = & \hspace{-6cm} \sum_{i=1}^n{\left\langle \nabla_{e_i}e_i,\nabla_{e_{\alpha}}e_{\alpha}\right\rangle}- \sum\limits_i{H^{i}_{\mathcal{F}^{\bot}}([e_{\alpha},e_{i}]^{\bot},e_{\alpha})}\\
	\\
&	\hspace{-12cm} - & \hspace{-6cm} \sum\limits_i{(\left\langle \nabla_{e_{\alpha}}e_i,\nabla_{e_i}e_{\alpha}\right\rangle + \left\langle \nabla_{e_i}e_i,\nabla_{e_{\alpha}}e_{\alpha}\right\rangle+\left\langle e_i,\nabla_{e_i}\nabla_{e_{\alpha}}e_{\alpha}\right\rangle)}\\
	\\
&	\hspace{-12cm} = & \hspace{-6cm} \sum_{i=1}^n{\left\langle \nabla_{e_i}e_i,\nabla_{e_{\alpha}}e_{\alpha}\right\rangle}- \sum\limits_i{H^{i}_{\mathcal{F}^{\bot}}([e_{\alpha},e_{i}]^{\bot},e_{\alpha})} -\sum_{i=1}^n{\left\langle \nabla_{e_i}e_i,\nabla_{e_{\alpha}}e_{\alpha}\right\rangle}\\
	\\
& \hspace{-12cm} - & \hspace{-6cm}\sum\limits_{i}{\left\langle \nabla_{e_{\alpha}}e_i,\nabla_{e_{i}}e_{\alpha}\right\rangle}-\sum\limits_i{\left\langle e_i,\nabla_{e_i}\nabla_{e_{\alpha}}e_{\alpha}\right\rangle}\\
	\\
&	\hspace{-12cm} = & \hspace{-6cm} - \sum\limits_i{H^{i}_{\mathcal{F}^{\bot}}([e_{\alpha},e_{i}]^{\bot},e_{\alpha})} - \sum\limits_{i}{\left\langle \nabla_{e_{\alpha}}e_i,\nabla_{e_{i}}e_{\alpha}\right\rangle}-\mathrm{div}_{\mathcal{F}}(\nabla_{e_{\alpha}}e_{\alpha})\\
	\\
& \hspace{-12cm}	= & \hspace{-6cm} - \sum\limits_i{H^{i}_{\mathcal{F}^{\bot}}([e_{\alpha},e_{i}]^{\bot},e_{\alpha})} - \sum\limits_{i}\sum\limits_{\beta}{\left\langle \nabla_{e_{\alpha}}e_i,e_{\beta}\right\rangle\left\langle \nabla_{e_{i}}e_{\alpha},e_{\beta}\right\rangle}-\mathrm{div}_{\mathcal{F}}(\nabla_{e_{\alpha}}e_{\alpha})\\
	\\
&	\hspace{-12cm} = & \hspace{-6cm} - \sum\limits_i{H^{i}_{\mathcal{F}^{\bot}}([e_{\alpha},e_{i}]^{\bot},e_{\alpha})} + \sum\limits_{i}\sum\limits_{\beta}{H^{i}_{\mathcal{F}^{\bot}}(e_{\alpha},e_{\beta})\left\langle \nabla_{e_{i}}e_{\alpha},e_{\beta}\right\rangle}-\mathrm{div}_{\mathcal{F}}(\nabla_{e_{\alpha}}e_{\alpha})\\
	\\
&	\hspace{-12cm} = & \hspace{-6cm} - \sum\limits_i{H^{i}_{\mathcal{F}^{\bot}}([e_{\alpha},e_{i}]^{\bot},e_{\alpha})} + \sum\limits_{i}\sum\limits_{\beta}{H^{i}_{\mathcal{F}^{\bot}}(e_{\alpha},\left\langle \nabla_{e_{i}}e_{\alpha},e_{\beta}\right\rangle e_{\beta})}-\mathrm{div}_{\mathcal{F}}(\nabla_{e_{\alpha}}e_{\alpha})\\
	\\
&	\hspace{-12cm} = & \hspace{-6cm} - \sum\limits_i{H^{i}_{\mathcal{F}^{\bot}}([e_{\alpha},e_{i}]^{\bot},e_{\alpha})} + \sum\limits_{i}{H^{i}_{\mathcal{F}^{\bot}}(e_{\alpha},\sum\limits_{\beta}{\left\langle \nabla_{e_{i}}e_{\alpha},e_{\beta}\right\rangle e_{\beta})}}-\mathrm{div}_{\mathcal{F}}(\nabla_{e_{\alpha}}e_{\alpha})\\
	\\
&	\hspace{-12cm} = &\hspace{-6cm}  - \sum\limits_i{H^{i}_{\mathcal{F}^{\bot}}([e_{\alpha},e_{i}]^{\bot},e_{\alpha})} + \sum\limits_{i}{H^{i}_{\mathcal{F}^{\bot}}(e_{\alpha},\nabla_{e_{i}}^{\bot}e_{\alpha}})-\mathrm{div}_{\mathcal{F}}(\nabla_{e_{\alpha}}e_{\alpha})\\
	\\
& \hspace{-12cm} = & \hspace{-6cm} \sum\limits_{i}{H^{i}_{\mathcal{F}^{\bot}}(e_{\alpha},\nabla_{e_{i}}^{\bot}e_{\alpha}}-[e_{\alpha},e_{i}]^{\bot})-\mathrm{div}_{\mathcal{F}}(\nabla_{e_{\alpha}}e_{\alpha}) \ \ \ \ \ \ \ \ \square
\end{eqnarray*}

\end{proof}

\section{The totally umbilical case}
\begin{prop}
Let $\mathcal{F}$ and $\mathcal{F}^{\bot}$ be two orthogonal foliations of complementary dimensions over a riemannian manifold $M$. If we suppose that $\mathcal{F}^{\bot}$ is totally umbilical, then there exists a basis $\left\{e_{n+1},\ldots,e_{n+p}\right\}$ of the bundle $T\mathcal{F}^{\bot}$ that simultaneously diagonalizes all symmetric operators $A_{e_{i}}$ for $i\in \left\{1,\ldots,n\right\}$.
\end{prop}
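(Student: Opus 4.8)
The plan is to recognize the statement as an instance of the spectral theorem for a commuting family of self-adjoint operators, and to extract the commutativity from the umbilicity hypothesis. I would fix a point $x\in M$ and work on the $p$-dimensional inner-product space $V=T_{x}\mathcal{F}^{\bot}$ with the induced metric, viewing each $A_{e_{i}}\colon V\to V$ as an endomorphism; the desired basis $\{e_{n+1},\ldots,e_{n+p}\}$ is then a common orthonormal eigenbasis of the family $\{A_{e_{i}}\}_{i=1}^{n}$, chosen smoothly in a neighborhood of $x$ so as to give a frame of the bundle.

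First I would verify that each $A_{e_{i}}$ is genuinely self-adjoint on $V$. Since $\langle A_{e_{i}}e_{\alpha},e_{\beta}\rangle=H_{\mathcal{F}^{\bot}}^{i}(e_{\alpha},e_{\beta})$, this amounts to the symmetry of the bilinear form $H_{\mathcal{F}^{\bot}}^{i}$, and a short computation using metric compatibility and the torsion-free property gives $H_{\mathcal{F}^{\bot}}^{i}(e_{\alpha},e_{\beta})-H_{\mathcal{F}^{\bot}}^{i}(e_{\beta},e_{\alpha})=\langle e_{i},[e_{\alpha},e_{\beta}]\rangle$. The right-hand side vanishes because $\mathcal{F}^{\bot}$ is integrable, so $[e_{\alpha},e_{\beta}]$ is tangent to $\mathcal{F}^{\bot}$, while $e_{i}$ is tangent to $\mathcal{F}$; this is the one place where a genuine geometric hypothesis (involutivity of $T\mathcal{F}^{\bot}$) enters to make the spectral theorem applicable.

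Next I would feed in total umbilicity. It says that the $T\mathcal{F}$-valued second fundamental form of the leaves of $\mathcal{F}^{\bot}$ is pure trace, i.e.\ $(\nabla_{e_{\alpha}}e_{\beta})^{\top}=\langle e_{\alpha},e_{\beta}\rangle\,\zeta$ for a single normal field $\zeta$ proportional to $h^{\bot}$. Pairing with $e_{i}$ yields $H_{\mathcal{F}^{\bot}}^{i}(e_{\alpha},e_{\beta})=\langle\zeta,e_{i}\rangle\,\delta_{\alpha\beta}$, so that $A_{e_{i}}=\langle\zeta,e_{i}\rangle\,\mathrm{Id}_{V}$ for every $i$. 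In particular the operators $A_{e_{i}}$ pairwise commute, and a commuting family of self-adjoint operators on the finite-dimensional space $V$ admits a common orthonormal eigenbasis; choosing it smoothly produces the claimed frame of $T\mathcal{F}^{\bot}$. I expect the main obstacle to be exactly this translation step: turning the geometric umbilicity condition into the algebraic statement that the $A_{e_{i}}$ commute. Here the obstacle dissolves because umbilicity forces each $A_{e_{i}}$ to be a scalar operator, so that in fact any orthonormal frame of $T\mathcal{F}^{\bot}$ already diagonalizes the entire family; were one to assume only the weaker condition that the leaves are umbilical in a common eigenbasis in each normal direction, the genuine work would be to prove $[A_{e_{i}},A_{e_{j}}]=0$ directly, for instance through a Ricci-type equation relating this commutator to the curvature of the normal bundle.
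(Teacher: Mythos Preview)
Your argument is correct and rests on the same key observation as the paper's: total umbilicity forces each $A_{e_{i}}$ to be a scalar multiple of the identity on $T_{x}\mathcal{F}^{\bot}$, whence simultaneous diagonalization is automatic. The paper arrives at this by an inductive scheme---diagonalize $A_{e_{1}}$, observe it is then a scalar, hence commutes with $A_{e_{2}}$, diagonalize both, and so on---while you extract $A_{e_{i}}=\langle\zeta,e_{i}\rangle\,\mathrm{Id}$ directly from the definition of umbilicity and then remark that any orthonormal frame already works. Your route is shorter and makes explicit that the appeal to the commuting-operators theorem is really unnecessary here; you also supply the verification of symmetry of $H_{\mathcal{F}^{\bot}}^{i}$ via integrability of $T\mathcal{F}^{\bot}$, which the paper simply assumes in the phrase ``symmetric operators $A_{e_{i}}$''.
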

\begin{proof}
In fact, there exists a basis of $T\mathcal{F}^{\bot}$ that diagonalizes the operator $A_{e_{1}}$ and then, by hypothesis, $A_{e_{1}}$ is a multiple of the identity matrix. Thus, in this basis, the operator $A_{e_{1}}$ commutes with the operator $A_{e_{2}}$. By a well known theorem of Linear Algebra, there exists a basis of $T\mathcal{F}^{\bot}$ that simultaneously diagonalizes the operators $A_{e_{1}}$ and $A_{e_{2}}$ and then, by hypothesis, both operators are multiples of the identity matrix. By induction, assume that the operators $A_{e_{1}},\ldots,A_{e_{n-1}}$ are multiples of the identity matrix and use the same argument to conclude that there exists a basis of $T\mathcal{F}^{\bot}$ with the desired property $\ \ \ \square$
\end{proof}
\begin{rem}
By proposition 1, for each $i$, there exists a basis $(e_{\alpha})$ of the bundle $T\mathcal{F}^{\bot}$ and a common eigenvalue $\lambda^{i}$ such that
	\begin{eqnarray*}
		A_{e_{i}}(e_{\alpha})=\lambda^{i}e_{\alpha} \ \ \ and \ \ \  H_{\mathcal{F}^{\bot}}^{i}(e_{\alpha},e_{\beta})=\lambda^{i}\delta_{\alpha\beta} 
	\end{eqnarray*}
for all $\alpha, \beta$.
\end{rem}

\begin{theorem}
	If $\mathcal{F}^{\bot}$ is a totally umbilical foliation and $e_{n+1},\ldots,e_{n+p}$ is the basis of the proposition 1, then
	\begin{eqnarray*}
		e_{\alpha}\left\langle h,e_{\alpha}\right\rangle-\left\|H_{_\mathcal{F}}^{^\alpha}\right\|^2-\sum_{i=1}^{n}{R(e_{\alpha},e_i,e_i,e_{\alpha})}=\sum\limits_{i=1}^{n}(\lambda^{i})^{2}-\mathrm{div}_{\mathcal{F}}(\nabla_{e_{\alpha}}e_{\alpha})
	\end{eqnarray*}
\end{theorem}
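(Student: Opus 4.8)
The plan is to obtain the asserted equation as a direct specialization of the main equation (Theorem~1), feeding in the algebraic structure supplied by Proposition~1 and Remark~1. Observe first that the two statements already share the same left-hand side, since by definition $\mathrm{Tr}(K^\alpha_{ij})=\sum_i R(e_\alpha,e_i,e_i,e_\alpha)$, and that the term $-\mathrm{div}_{\mathcal{F}}(\nabla_{e_\alpha}e_\alpha)$ occurs unchanged on both right-hand sides. Thus the whole proof reduces to establishing the single identity
\[
\sum_{i=1}^{n}H^{i}_{\mathcal{F}^{\bot}}\bigl(e_\alpha,\nabla^{\bot}_{e_i}e_\alpha-[e_\alpha,e_i]^{\bot}\bigr)=\sum_{i=1}^{n}(\lambda^i)^2 .
\]

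First I would use Remark~1 to linearize $H^{i}_{\mathcal{F}^{\bot}}$. Because $H^{i}_{\mathcal{F}^{\bot}}(e_\alpha,e_\beta)=\lambda^i\delta_{\alpha\beta}$, bilinearity gives $H^{i}_{\mathcal{F}^{\bot}}(e_\alpha,Y)=\lambda^i\langle e_\alpha,Y\rangle$ for every section $Y$ of $T\mathcal{F}^{\bot}$. Applying this with $Y=\nabla^{\bot}_{e_i}e_\alpha-[e_\alpha,e_i]^{\bot}$, and dropping the superfluous $\bot$-projections inside inner products taken against $e_\alpha\in T\mathcal{F}^{\bot}$, the left-hand sum becomes $\sum_i\lambda^i\bigl(\langle e_\alpha,\nabla_{e_i}e_\alpha\rangle-\langle e_\alpha,[e_\alpha,e_i]\rangle\bigr)$.

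Next I would compute the two inner products. The first vanishes at once, since $\langle e_\alpha,\nabla_{e_i}e_\alpha\rangle=\frac{1}{2}e_i\langle e_\alpha,e_\alpha\rangle=0$. For the second I would expand $[e_\alpha,e_i]=\nabla_{e_\alpha}e_i-\nabla_{e_i}e_\alpha$; the contribution of $\nabla_{e_i}e_\alpha$ vanishes for the same reason, while $\langle e_\alpha,\nabla_{e_\alpha}e_i\rangle=\langle e_\alpha,(\nabla_{e_\alpha}e_i)^{\bot}\rangle=-\langle A_{e_i}(e_\alpha),e_\alpha\rangle=-\lambda^i$, using the definition $A_{e_i}(e_\alpha)=-(\nabla_{e_\alpha}e_i)^{\bot}$ together with the eigenvalue relation $A_{e_i}(e_\alpha)=\lambda^i e_\alpha$ of Remark~1. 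Hence $\langle e_\alpha,[e_\alpha,e_i]\rangle=-\lambda^i$, and the sum collapses to $\sum_i\lambda^i\cdot\lambda^i=\sum_i(\lambda^i)^2$, as required; substituting back into Theorem~1 finishes the proof.

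The one point that demands care is the sign bookkeeping in the identity $\langle e_\alpha,[e_\alpha,e_i]\rangle=-\lambda^i$: the minus sign built into the Weingarten convention $A_{e_i}(e_\alpha)=-(\nabla_{e_\alpha}e_i)^{\bot}$ must combine correctly with the minus sign standing in front of $[e_\alpha,e_i]^{\bot}$ in the main equation, so that the two negatives cancel and leave the manifestly nonnegative term $\sum_i(\lambda^i)^2$. Everything else is a mechanical substitution, and I anticipate no further difficulty.
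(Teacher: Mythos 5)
Your proposal is correct and follows essentially the same route as the paper: both reduce the theorem to the identity $\sum_{i}H^{i}_{\mathcal{F}^{\bot}}(e_{\alpha},\nabla^{\bot}_{e_{i}}e_{\alpha}-[e_{\alpha},e_{i}]^{\bot})=\sum_{i}(\lambda^{i})^{2}$ and verify it using the eigenvalue relation $A_{e_{i}}(e_{\alpha})=\lambda^{i}e_{\alpha}$ of Remark~1, the expansion $[e_{\alpha},e_{i}]=\nabla_{e_{\alpha}}e_{i}-\nabla_{e_{i}}e_{\alpha}$, and the vanishing of $\langle\nabla_{e_{i}}e_{\alpha},e_{\alpha}\rangle$. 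Your direct linearization $H^{i}_{\mathcal{F}^{\bot}}(e_{\alpha},Y)=\lambda^{i}\langle e_{\alpha},Y\rangle$ merely condenses the paper's summation over $\beta$ into a single step; the substance and the sign bookkeeping are identical.
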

\begin{proof}
	It is sufficient to show that 
	\begin{eqnarray}
		\sum\limits_{i}{H^{i}_{\mathcal{F}^{\bot}}(e_{\alpha},\nabla_{e_{i}}^{\bot}e_{\alpha}}-[e_{\alpha},e_{i}]^{\bot})=\sum_{i}{(\lambda^{i})^2}
	\end{eqnarray}
Then, by theorem 1, we conclude that
\begin{eqnarray*}
		e_{\alpha}\left\langle h,e_{\alpha}\right\rangle-\left\|H_{_\mathcal{F}}^{^\alpha}\right\|^2-\sum_{i=1}^{n}{R(e_{\alpha},e_i,e_i,e_{\alpha})}=\sum\limits_{i=1}^{n}(\lambda^{i})^{2}-\mathrm{div}_{\mathcal{F}}(\nabla_{e_{\alpha}}e_{\alpha})
	\end{eqnarray*}
The proof of (8) is an easy computation
	\begin{eqnarray*}
\sum_{i}{H^{i}_{\mathcal{F}^{\bot}}(e_{\alpha},\nabla_{e_{i}}^{\bot}e_{\alpha}}\!\!\!\!\!&-&\!\!\!\!\![e_{\alpha},e_{i}]^{\bot})=\sum_{i}{\langle -\nabla_{e_{\alpha}}e_i,\sum_{\beta}{\langle \nabla_{e_{i}}^{\bot}e_{\alpha}-[e_{\alpha},e_{i}]^{\bot},e_{\beta}\rangle e_{\beta}}\rangle}\\
& = &\!\!\!\! \sum_{i}\sum_{\beta}{\langle \nabla_{e_{i}}^{\bot}e_{\alpha}-[e_{\alpha},e_{i}]^{\bot},e_{\beta}\rangle\langle -\nabla_{e_{\alpha}}e_i, e_{\beta}\rangle}\\
& = &\!\!\!\! \sum_{i}\sum_{\beta}{\langle \nabla_{e_{i}}^{\bot}e_{\alpha}-[e_{\alpha},e_{i}]^{\bot},e_{\beta}\rangle \langle A_{e_i}(e_{\alpha}), e_{\beta}\rangle}\\
& = &\!\!\!\! \sum_{i}\sum_{\beta}{\langle \nabla_{e_{i}}^{\bot}e_{\alpha}-[e_{\alpha},e_{i}]^{\bot},e_{\beta}\rangle \langle \lambda^{i}e_{\alpha}, e_{\beta}\rangle}\\
& = &\!\!\!\! \sum_{i}{\lambda^{i} \langle \nabla_{e_{i}}^{\bot}e_{\alpha}-[e_{\alpha},e_{i}]^{\bot},e_{\alpha}\rangle}\\
& = &\!\!\!\! \sum_{i}{\lambda^{i} \langle \nabla_{e_{i}}^{\bot}e_{\alpha},e_{\alpha}\rangle}-\sum_{i}{\lambda^{i} \langle [e_{\alpha},e_{i}]^{\bot},e_{\alpha}\rangle}\\
& = &\!\!\!\! \sum_{i}{\lambda^{i} \langle \nabla_{e_{i}}^{\bot}e_{\alpha},e_{\alpha}\rangle}-\sum_{i}{\lambda^{i} \langle \nabla_{e_{\alpha}}^{\bot}e_i-\nabla_{e_i}^{\bot}e_{\alpha},e_{\alpha}\rangle}\\
& = &\!\!\!\! 2\sum_{i}{\lambda^{i} \langle \nabla_{e_{i}}^{\bot}e_{\alpha},e_{\alpha}\rangle}-\sum_{i}{\lambda^{i} \langle \nabla_{e_{\alpha}}^{\bot}e_i,e_{\alpha}\rangle}\\
& = &\!\!\!\! 2\sum_{i}{\lambda^{i} \langle \nabla_{e_{i}}e_{\alpha},e_{\alpha}\rangle}-\sum_{i}{\lambda^{i} \langle \nabla_{e_{\alpha}}e_i,e_{\alpha}\rangle}\\
& = &\!\!\!\! 0+\sum_{i}{\lambda^{i} \langle -\nabla_{e_{\alpha}}e_i,e_{\alpha}\rangle}\\
& = &\!\!\!\! \sum_{i}{\lambda^{i} \langle A_{e_i}(e_{\alpha}),e_{\alpha}\rangle}\\
& = &\!\!\!\! \sum_{i}{\lambda^{i} \langle \lambda^{i}e_{\alpha},e_{\alpha}\rangle}\\
& = &\!\!\!\! \sum_{i}{(\lambda^{i})^2} \ \ \ \ \ \ \square
\end{eqnarray*}
\end{proof}
\par In this section we suppose that $h$ and $h^{\bot}$ are normalized, that is, we have the following equalities
\begin{eqnarray*}
	h\!\!\!&=&\!\!\!\frac{1}{n}\sum\limits_{i}\left(\nabla_{e_{i}}e_{i}\right)^{\bot}\\
	h^{\bot}\!\!\!&=&\!\!\!\frac{1}{p}\sum\limits_{\alpha}\left(\nabla_{e_{\alpha}}e_{\alpha}\right)^{\top}
\end{eqnarray*}
 If the foliation $\mathcal{F}^{\bot}$ is totally umbilical, then
 \begin{eqnarray}
	 h^{\bot}=\sum_{i=1}^{n}{\lambda(x,e_i)e_i}=\sum_{i=1}^{n}{\lambda^{i}e_i}
 \end{eqnarray}
 where $\lambda^{i}$ is the eingevalue of the Weingarten operator $A_{e_{i}}$.
\begin{prop}
	Let $\mathcal{F}$ and $\mathcal{F}^{\bot}$ be two orthogonal foliations of complementary dimensions over a riemannian manifold $M$. If we suppose that $\mathcal{F}^{\bot}$ is totally umbilical, then
	\begin{eqnarray*}
		n\left\langle \nabla_{e_{\alpha}}h,e_{\alpha}\right\rangle-\left\|H_{_\mathcal{F}}^{^\alpha}\right\|^2-\sum\limits_{i=1}^{n}{R(e_{\alpha},e_i,e_i,e_{\alpha})}-\|h^{\bot}\|^{2}+\mathrm{div}_{\mathcal{F}}(h^{\bot})=0.
	\end{eqnarray*}
\end{prop}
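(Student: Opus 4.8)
The plan is to read off the statement from Theorem 2 after rewriting only two of its terms, using metric compatibility together with the totally umbilical hypothesis. Fix $\alpha$. First I would invoke equation (9): total umbilicity gives $h^{\bot}=\sum_{i}\lambda^{i}e_{i}$, and since the $e_{i}$ are orthonormal this yields $\sum_{i}(\lambda^{i})^{2}=\|h^{\bot}\|^{2}$. Inserting this into Theorem 2 makes its right-hand side equal to $\|h^{\bot}\|^{2}-\mathrm{div}_{\mathcal{F}}(\nabla_{e_{\alpha}}e_{\alpha})$, so the task reduces to turning $e_{\alpha}\langle h,e_{\alpha}\rangle$ into $n\langle\nabla_{e_{\alpha}}h,e_{\alpha}\rangle$ and $\mathrm{div}_{\mathcal{F}}(\nabla_{e_{\alpha}}e_{\alpha})$ into $\mathrm{div}_{\mathcal{F}}(h^{\bot})$. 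Here $h$ denotes the normalized mean curvature of this section, so the mean curvature appearing in Theorem 2 equals $nh$.

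The geometric fact I would isolate first is that $(\nabla_{e_{\alpha}}e_{\alpha})^{\top}=h^{\bot}$ for each fixed $\alpha$. Differentiating $\langle e_{\alpha},e_{i}\rangle=0$ along $e_{\alpha}$ gives $\langle\nabla_{e_{\alpha}}e_{\alpha},e_{i}\rangle=-\langle e_{\alpha},\nabla_{e_{\alpha}}e_{i}\rangle=\langle e_{\alpha},A_{e_{i}}(e_{\alpha})\rangle$, which equals $\lambda^{i}$ by Remark 1; summing over $i$ produces $(\nabla_{e_{\alpha}}e_{\alpha})^{\top}=\sum_{i}\lambda^{i}e_{i}=h^{\bot}$. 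Setting $\eta_{\alpha}:=(\nabla_{e_{\alpha}}e_{\alpha})^{\bot}$, so that $\nabla_{e_{\alpha}}e_{\alpha}=h^{\bot}+\eta_{\alpha}$, linearity of $\mathrm{div}_{\mathcal{F}}$ splits $\mathrm{div}_{\mathcal{F}}(\nabla_{e_{\alpha}}e_{\alpha})=\mathrm{div}_{\mathcal{F}}(h^{\bot})+\mathrm{div}_{\mathcal{F}}(\eta_{\alpha})$.

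Next I would dispose of $\mathrm{div}_{\mathcal{F}}(\eta_{\alpha})$ with the standard identity for the divergence of a normal field: since $\langle e_{i},\eta_{\alpha}\rangle=0$, differentiating along $e_{i}$ gives $\langle e_{i},\nabla_{e_{i}}\eta_{\alpha}\rangle=-\langle\nabla_{e_{i}}e_{i},\eta_{\alpha}\rangle=-\langle(\nabla_{e_{i}}e_{i})^{\bot},\eta_{\alpha}\rangle$, and summation yields $\mathrm{div}_{\mathcal{F}}(\eta_{\alpha})=-\langle nh,\eta_{\alpha}\rangle$. In parallel, metric compatibility gives $e_{\alpha}\langle nh,e_{\alpha}\rangle=n\langle\nabla_{e_{\alpha}}h,e_{\alpha}\rangle+\langle nh,\nabla_{e_{\alpha}}e_{\alpha}\rangle$, and because $nh$ is normal the last inner product reduces to $\langle nh,\eta_{\alpha}\rangle$.

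Finally I would substitute $e_{\alpha}\langle nh,e_{\alpha}\rangle=n\langle\nabla_{e_{\alpha}}h,e_{\alpha}\rangle+\langle nh,\eta_{\alpha}\rangle$ and $\mathrm{div}_{\mathcal{F}}(\nabla_{e_{\alpha}}e_{\alpha})=\mathrm{div}_{\mathcal{F}}(h^{\bot})-\langle nh,\eta_{\alpha}\rangle$ back into Theorem 2; the two copies of $\langle nh,\eta_{\alpha}\rangle$ cancel and leave precisely the claimed identity. I expect the only delicate point to be this cancellation together with the normalization bookkeeping: one must track the factor $n$ relating the unnormalized mean curvature of Theorem 2 to the normalized $h$ used here, and verify that the normal-component term $\langle nh,\eta_{\alpha}\rangle$ produced by the Leibniz expansion carries the opposite sign to the one produced by $\mathrm{div}_{\mathcal{F}}(\eta_{\alpha})$, so that the contributions of $\eta_{\alpha}=(\nabla_{e_{\alpha}}e_{\alpha})^{\bot}$ drop out cleanly.
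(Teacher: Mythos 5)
Your proposal is correct and takes essentially the same route as the paper: the paper likewise combines its equations (8) and (9) to replace $\sum_i(\lambda^i)^2$ by $\|h^{\bot}\|^2$, establishes $\mathrm{div}_{\mathcal{F}}(\nabla_{e_{\alpha}}e_{\alpha})=\mathrm{div}_{\mathcal{F}}(h^{\bot})-n\langle\nabla_{e_{\alpha}}e_{\alpha},h\rangle$ using the fact $(\nabla_{e_{\alpha}}e_{\alpha})^{\top}=h^{\bot}$, and then cancels the $n\langle h,\nabla_{e_{\alpha}}e_{\alpha}\rangle$ terms against the Leibniz expansion of $e_{\alpha}\langle nh,e_{\alpha}\rangle$ in Theorem 2. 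Your decomposition $\nabla_{e_{\alpha}}e_{\alpha}=h^{\bot}+\eta_{\alpha}$ with $\mathrm{div}_{\mathcal{F}}(\eta_{\alpha})=-\langle nh,\eta_{\alpha}\rangle$ is merely a tidier packaging of the paper's term-by-term computation of its equation (11), and your normalization bookkeeping (the factor $n$ relating Theorem 2's unnormalized mean curvature to the normalized $h$ of this section) matches what the paper does implicitly.
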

\begin{proof}
By (8) and (9) we have 
\begin{eqnarray}
\sum_{i}{H^{i}_{\mathcal{F}^{\bot}}(e_{\alpha},\nabla_{e_{i}}^{\bot}e_{\alpha}}-[e_{\alpha},e_{i}]^{\bot}) \!\!\!& = &\!\!\! \|h^{\bot}\|^{2}
\end{eqnarray}
We also have the following equation
\begin{eqnarray}
\mathrm{div}_{\mathcal{F}}(\nabla_{e_{\alpha}}e_{\alpha}) \!\!\!& = &\!\!\! \mathrm{div}_{\mathcal{F}}(h^{\bot})-n\left\langle \nabla_{e_{\alpha}}e_{\alpha},h\right\rangle.
\end{eqnarray}
In fact, by (9) 
\begin{eqnarray*}
	h^{\bot}=\sum_{i=1}^{n}{\lambda^{i}e_i}=\sum\limits_{i=1}^{n}\left\langle \nabla_{e_{\alpha}}e_{\alpha},e_{i}\right\rangle e_{i}=\left(\nabla_{e_{\alpha}}e_{\alpha}\right)^{\top}
\end{eqnarray*}
and then
\begin{eqnarray*}
\mathrm{div}_{\mathcal{F}}(\nabla_{e_{\alpha}}e_{\alpha}) \!\!\!& = &\!\!\! \sum_{i=1}^{n}{\left\langle \nabla_{e_i}\nabla_{e_{\alpha}}e_{\alpha},e_i\right\rangle}\\
& = &\!\!\! \sum_{i=1}^{n}{e_i\left\langle \nabla_{e_{\alpha}}e_{\alpha},e_i\right\rangle}-\sum_{i=1}^{n}{\left\langle \nabla_{e_{\alpha}}e_{\alpha},\nabla_{e_{i}}e_{i}\right\rangle}\\
& = &\!\!\! \sum_{i=1}^{n}{e_i\left\langle \nabla_{e_{\alpha}}e_{\alpha},e_i\right\rangle}-\sum_{i=1}^{n}{\left\langle \nabla_{e_{\alpha}}e_{\alpha},\nabla^{\top}_{e_{i}}e_{i}\right\rangle}-\sum_{i=1}^{n}{\left\langle \nabla_{e_{\alpha}}e_{\alpha},\nabla^{\bot}_{e_{i}}e_{i}\right\rangle}\\
& = &\!\!\! \sum_{i=1}^{n}{e_i\left\langle \nabla_{e_{\alpha}}e_{\alpha},e_i\right\rangle}-n\left\langle \nabla_{e_{\alpha}}e_{\alpha},h\right\rangle-\sum_{i=1}^{n}{\left\langle \nabla_{e_{\alpha}}e_{\alpha},\nabla^{\top}_{e_{i}}e_{i}\right\rangle}\\
& = & \!\!\!\sum_{i=1}^{n}{e_i\left\langle h^{\bot},e_i\right\rangle}-n\left\langle \nabla_{e_{\alpha}}e_{\alpha},h\right\rangle-\sum_{i=1}^{n}{\left\langle h^{\bot},\nabla^{\top}_{e_{i}}e_{i}\right\rangle}\\
& = &\!\!\! \sum_{i=1}^{n}{\left\langle \nabla_{e_i}h^{\bot},e_i\right\rangle}-n\left\langle \nabla_{e_{\alpha}}e_{\alpha},h\right\rangle\\
& = & \!\!\!\mathrm{div}_{\mathcal{F}}(h^{\bot})-n\left\langle \nabla_{e_{\alpha}}e_{\alpha},h\right\rangle
\end{eqnarray*}
We conclude the proof using (10), (11) and the theorem 2 $\ \ \square$
\end{proof}
\par Using the proposition 2 we obtain the following theorem

\begin{theorem}
	Let $M$ be a complete riemannian manifold and denote by $\mathcal{F}$ and $\mathcal{F}^{\bot}$ two orthogonal foliations of complementary dimensions on $M$. Suppose that
	\begin{enumerate}
		\item $\mathcal{F}^{\bot}$ is totally umbilical for $p\geq 2$ or a foliation by round circles
		\item $h^{\bot}$ has bounded length and $\left\|h^{\bot}\right\|$ attains a maximum at a leaf $L^{\bot}$ of $\mathcal{F}^{\bot}$
		\item $\mathrm{Tr}(K_{ij}^{\alpha})\geq 0$
		\item $\mathrm{div}_{\mathcal{F}}(h^{\bot})(x)\leq\mathrm{Tr}(K_{ij}^{\alpha})(x)$ for all $x\in L^{\bot}$
	\end{enumerate}
	Then $\mathrm{Tr}(K_{ij}^{\alpha})=0$, $\mathcal{F}$ and $\mathcal{F}^{\bot}$ are totally geodesic and $M$ is locally a riemannian product of a leave of $\mathcal{F}$ and a leave of $\mathcal{F}^{\bot}$.
\end{theorem}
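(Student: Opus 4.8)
The plan is to restate the conclusion as the assertion that both $\mathcal{F}$ and $\mathcal{F}^{\bot}$ are totally geodesic, and then to extract this from Proposition 2 by a maximum principle on the distinguished leaf $L^{\bot}$. First I would record the reduction. Since $\mathcal{F}^{\bot}$ is totally umbilical, Remark 1 gives $H_{\mathcal{F}^{\bot}}^{i}=\lambda^{i}\langle\cdot,\cdot\rangle$, so $\mathcal{F}^{\bot}$ is totally geodesic precisely when every $\lambda^{i}$ vanishes, i.e.\ when $h^{\bot}=0$; likewise $\mathcal{F}$ is totally geodesic precisely when $\|H_{\mathcal{F}}^{\alpha}\|=0$ for all $\alpha$. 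Moreover, if two orthogonal complementary foliations are both totally geodesic then their tangent distributions are parallel: for $X\in T\mathcal{F}^{\bot}$, $Y\in T\mathcal{F}$ and $V\in T\mathcal{F}^{\bot}$ one has $\langle\nabla_{X}Y,V\rangle=-\langle Y,\nabla_{X}V\rangle=0$, and symmetrically, so the local de Rham theorem shows $M$ is locally the Riemannian product of a leaf of $\mathcal{F}$ with a leaf of $\mathcal{F}^{\bot}$; in such a product the mixed curvatures $R(e_{\alpha},e_{i},e_{i},e_{\alpha})$ vanish, so $\mathrm{Tr}(K_{ij}^{\alpha})=0$ is automatic. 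Hence it suffices to prove $h^{\bot}\equiv0$ and $\|H_{\mathcal{F}}^{\alpha}\|\equiv0$.

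Next I would rewrite Proposition 2 as $n\langle\nabla_{e_{\alpha}}h,e_{\alpha}\rangle=\|H_{\mathcal{F}}^{\alpha}\|^{2}+\big(\mathrm{Tr}(K_{ij}^{\alpha})-\mathrm{div}_{\mathcal{F}}(h^{\bot})\big)+\|h^{\bot}\|^{2}$. Along the maximal leaf $L^{\bot}$, hypothesis (4) makes the parenthesis nonnegative while the remaining two summands are manifestly nonnegative, so $\langle\nabla_{e_{\alpha}}h,e_{\alpha}\rangle\ge0$ on $L^{\bot}$ for each $\alpha$; summing over $\alpha$ gives $n\,\mathrm{div}_{\mathcal{F}^{\bot}}(h)\ge p\,\|h^{\bot}\|^{2}\ge0$ on $L^{\bot}$. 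The point is that, because $h$ is tangent to $\mathcal{F}^{\bot}$, the quantity $\mathrm{div}_{\mathcal{F}^{\bot}}(h)$ is nothing but the intrinsic divergence of $h$ viewed as a vector field on the leaf $L^{\bot}$.

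The heart of the proof is a divergence-theorem argument, which I would run by contradiction. Suppose $\max\|h^{\bot}\|>0$; by hypothesis (2) the maximum is attained along the whole leaf $L^{\bot}$, so $\|h^{\bot}\|$ is a positive constant there. This is where hypothesis (1) enters: a leaf of a totally umbilical foliation that carries a nonzero constant mean curvature closes up into a compact round object --- a geodesic sphere when $p\ge2$ and a round circle when $p=1$ --- so $L^{\bot}$ is compact. Integrating $n\,\mathrm{div}_{\mathcal{F}^{\bot}}(h)\ge p\,\|h^{\bot}\|^{2}$ over $L^{\bot}$, the divergence theorem kills the left-hand side while the right-hand side is strictly positive, a contradiction. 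Therefore $\max\|h^{\bot}\|=0$, and as this is a global maximum, $h^{\bot}\equiv0$ on $M$, i.e.\ $\mathcal{F}^{\bot}$ is totally geodesic. I expect this compactness claim to be the main obstacle: one must extract from ``totally umbilical with $p\ge2$, or a foliation by round circles'' the rigidity that a leaf of constant nonzero mean curvature is a closed round sphere (respectively circle), which is exactly what lets the divergence theorem operate on the noncompact $M$; the boundedness and completeness in (2) are what guarantee the maximum is attained and this rigidity is available.

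Finally I would feed $h^{\bot}\equiv0$ back into the picture. Now $\mathcal{F}^{\bot}$ is totally geodesic and hypothesis (3) supplies $\mathrm{Tr}(K_{ij}^{\alpha})\ge0$, which is exactly the mixed curvature entering the equation behind the Brito--Walczak theorem \cite{Brito1985410}. Invoking that result (its proof uses only this mixed curvature together with the second fundamental form of $\mathcal{F}$, here with $h^{\bot}=0$) shows $M$ is locally a Riemannian product; consequently $\|H_{\mathcal{F}}^{\alpha}\|\equiv0$, so $\mathcal{F}$ is totally geodesic, and $\mathrm{Tr}(K_{ij}^{\alpha})\equiv0$, completing the proof.
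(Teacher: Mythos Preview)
Your reduction to Proposition 2 and the reading of its right-hand side on $L^{\bot}$ are correct, and the divergence-theorem idea would indeed work if $L^{\bot}$ were compact. The gap is exactly where you anticipate it: compactness of $L^{\bot}$ cannot be extracted from the hypotheses. First, hypothesis (2) says only that the global maximum of $\|h^{\bot}\|$ is attained at some point of $L^{\bot}$, not that $\|h^{\bot}\|$ is constant along $L^{\bot}$. Second, and more seriously, even a complete totally umbilical submanifold with nonzero constant mean curvature in a general complete Riemannian manifold need not be compact; that rigidity is special to space forms, whereas here you only know $\mathrm{Tr}(K_{ij}^{\alpha})\ge 0$. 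Already for $p=1$: in the hyperbolic plane a curve of constant geodesic curvature $\kappa\le 1$ is a horocycle or an equidistant curve, not a closed circle. So there is nothing compact to integrate over, and the argument stalls.

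The paper avoids compactness entirely by a Riccati-type blow-up on $L^{\bot}$ (Lemma 1). Setting $e_{\alpha}=h/\|h\|$ at a point where $h\ne 0$, Proposition 2 together with (3) and (4) gives
\[
n\,e_{\alpha}(\|h\|)=n\langle\nabla_{e_{\alpha}}h,e_{\alpha}\rangle\ge\|H_{\mathcal{F}}^{\alpha}\|^{2},
\]
and Cauchy--Schwarz bounds the right side below by a positive multiple of $\|h\|^{2}$. Along the integral curve of $h$ inside $L^{\bot}$, reparametrized by arclength (which has infinite length by completeness, since $\|h\|$ is nondecreasing and bounded away from zero), this yields the differential inequality $\big(-1/\|h\|\big)'\ge c>0$, forcing $\|h\|\to\infty$ in finite arclength, a contradiction. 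Hence $h\equiv 0$ on $L^{\bot}$. Feeding this back into Proposition 2 on $L^{\bot}$ gives $0\ge\|h^{\bot}\|^{2}$ there, so $h^{\bot}$ vanishes at its maximum and therefore on all of $M$. With $h^{\bot}\equiv 0$ and $\mathrm{div}_{\mathcal{F}}(h^{\bot})\equiv 0$, the same argument on every leaf of $\mathcal{F}^{\bot}$ gives $h\equiv 0$, and Proposition 2 then forces $\|H_{\mathcal{F}}^{\alpha}\|\equiv 0$ and $\mathrm{Tr}(K_{ij}^{\alpha})\equiv 0$ directly; no separate appeal to \cite{Brito1985410} is required.
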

\par Then, as a corollary, we have the following theorem (proved in \cite{Almeida2017573})
\begin{theorem}
Let $M$ be a complete riemannian manifold of constant curvature $c\geq 0$ and denote by $\mathcal{F}$ and $\mathcal{F}^{\bot}$ two orthogonal foliations of complementary dimensions on $M$. Suppose that
	\begin{enumerate}
		\item $\mathcal{F}^{\bot}$ is totally umbilical for $p\geq 2$ or a foliation by round circles
		\item $h^{\bot}$ has bounded length and $\left\|h^{\bot}\right\|$ attains a maximum at a leaf $L^{\bot}$ of $\mathcal{F}^{\bot}$
		\item $\mathrm{div}_{\mathcal{F}}(h^{\bot})(x)\leq nc$ for all $x\in L^{\bot}$
	\end{enumerate}
	Then $c=0$, $\mathcal{F}$ and $\mathcal{F}^{\bot}$ are totally geodesic and $M$ is locally a riemannian product of a leave of $\mathcal{F}$ and a leave of $\mathcal{F}^{\bot}$.
\end{theorem}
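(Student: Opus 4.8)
The plan is to obtain this theorem as a direct corollary of Theorem 3 by identifying the quantity $\mathrm{Tr}(K_{ij}^{\alpha})$ in the constant-curvature setting. First I would record that when $M$ has constant sectional curvature $c$, every sectional curvature equals $c$; in particular, with the curvature sign convention fixed in the proof of Theorem 1 (so that $R(e_{\alpha},e_i,e_i,e_{\alpha})$ is the sectional curvature of the plane spanned by the orthonormal pair $e_{\alpha},e_i$), one has $R(e_{\alpha},e_i,e_i,e_{\alpha})=c$ for each $i$. Summing over $i$ then gives
\begin{equation*}
\mathrm{Tr}(K_{ij}^{\alpha})=\sum_{i=1}^{n}R(e_{\alpha},e_i,e_i,e_{\alpha})=nc.
\end{equation*}

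Next I would verify that the four hypotheses of Theorem 3 are all in force. Hypotheses 1 and 2 are identical to those assumed here. Since $c\geq 0$ and $n\geq 1$, the identity above gives $\mathrm{Tr}(K_{ij}^{\alpha})=nc\geq 0$, which is hypothesis 3 of Theorem 3. Substituting $nc=\mathrm{Tr}(K_{ij}^{\alpha})$ into hypothesis 3 of the present statement turns the bound $\mathrm{div}_{\mathcal{F}}(h^{\bot})(x)\leq nc$ into $\mathrm{div}_{\mathcal{F}}(h^{\bot})(x)\leq\mathrm{Tr}(K_{ij}^{\alpha})(x)$ for all $x\in L^{\bot}$, which is hypothesis 4 of Theorem 3.

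With all hypotheses met, Theorem 3 applies and yields at once that $\mathrm{Tr}(K_{ij}^{\alpha})=0$, that $\mathcal{F}$ and $\mathcal{F}^{\bot}$ are totally geodesic, and that $M$ is locally a riemannian product of a leaf of $\mathcal{F}$ and a leaf of $\mathcal{F}^{\bot}$. Finally, combining $\mathrm{Tr}(K_{ij}^{\alpha})=0$ with the identity $\mathrm{Tr}(K_{ij}^{\alpha})=nc$ forces $nc=0$, and since $n\geq 1$ this gives $c=0$, completing the reduction. As this is a corollary, there is no genuine analytic obstacle; the only point demanding care is the curvature sign convention, which must be the one used in Theorem 1 so that $R(e_{\alpha},e_i,e_i,e_{\alpha})=+c$ and hence $\mathrm{Tr}(K_{ij}^{\alpha})=+nc$ rather than $-nc$ — the opposite sign would invalidate both the nonnegativity needed for hypothesis 3 and the identification used in hypothesis 4.
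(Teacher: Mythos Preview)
Your proposal is correct and matches the paper's approach exactly: the paper also states Theorem 4 as a direct corollary of Theorem 3, obtained by observing that constant curvature $c$ gives $\mathrm{Tr}(K_{ij}^{\alpha})=nc$, so that hypotheses 3 and 4 of Theorem 3 follow from $c\geq 0$ and from $\mathrm{div}_{\mathcal{F}}(h^{\bot})\leq nc$ respectively, and then the conclusion $\mathrm{Tr}(K_{ij}^{\alpha})=0$ forces $c=0$. Your attention to the curvature sign convention is well placed but not an issue here, since the paper's convention indeed makes $R(e_{\alpha},e_i,e_i,e_{\alpha})$ the sectional curvature.
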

\par Our proof of the theorem 3 is very similar to the proof of theorem 4, but we obtain a more general result by applying the proposition 2. The following lemma is an example of application of the proposition 2, it is a little bit more general than the lemma 4.1 of \cite{Almeida2017573}.
\begin{lemma}
	$h$ vanishes along a leaf $L^{\bot}$ of $\mathcal{F}^{\bot}$ where $\left\|h^{\bot}\right\|$ attains a maximum. Furthermore, the mean curvature $h^{\bot}$ of each leaf of $\mathcal{F}^{\bot}$ vanishes, $\mathrm{Tr}(K_{ij}^{\alpha})=0$ and $h=0$.
\end{lemma}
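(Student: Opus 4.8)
\par The plan is to combine Proposition 2 with a divergence argument on the distinguished leaf $L^{\bot}$. First I would fix the leaf $L^{\bot}$ of $\mathcal{F}^{\bot}$ on which $\left\|h^{\bot}\right\|$ attains its maximum (hypothesis (2)) and regard the mean curvature $h$ of $\mathcal{F}$ as a vector field \emph{tangent} to $L^{\bot}$, since $h\in T\mathcal{F}^{\bot}$. Consequently $\mathrm{div}_{\mathcal{F}^{\bot}}(h)$ coincides with the intrinsic divergence of $h$ along $L^{\bot}$. Summing the identity of Proposition 2 over $\alpha=n+1,\ldots,n+p$ yields
\[
n\,\mathrm{div}_{\mathcal{F}^{\bot}}(h)=\sum_{\alpha}\left\|H_{\mathcal{F}}^{\alpha}\right\|^{2}+\sum_{\alpha}\left[\mathrm{Tr}(K^{\alpha}_{ij})-\mathrm{div}_{\mathcal{F}}(h^{\bot})\right]+p\left\|h^{\bot}\right\|^{2}.
\]

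\par On $L^{\bot}$ the sign hypotheses now take over: (4) gives $\mathrm{Tr}(K^{\alpha}_{ij})-\mathrm{div}_{\mathcal{F}}(h^{\bot})\geq0$ and (3) gives $\mathrm{Tr}(K^{\alpha}_{ij})\geq0$, so every term on the right is nonnegative and
\[
n\,\mathrm{div}_{\mathcal{F}^{\bot}}(h)\geq\sum_{\alpha}\left\|H_{\mathcal{F}}^{\alpha}\right\|^{2}+p\left\|h^{\bot}\right\|^{2}\geq0.
\]
I would then integrate this inequality over $L^{\bot}$. When $p=1$ the round-circle hypothesis makes $L^{\bot}$ compact, so Stokes' theorem gives $\int_{L^{\bot}}\mathrm{div}_{\mathcal{F}^{\bot}}(h)=0$ and the nonnegative right-hand side must vanish pointwise; when $p\geq2$ the boundedness of $\left\|h^{\bot}\right\|$ together with completeness of $M$ supplies the maximum principle playing the same role. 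Either way $\left\|h^{\bot}\right\|=0$ and $\left\|H_{\mathcal{F}}^{\alpha}\right\|=0$ on $L^{\bot}$. Since $\left\langle h,e_{\alpha}\right\rangle=\frac{1}{n}\,\mathrm{tr}\,H_{\mathcal{F}}^{\alpha}$, the vanishing of all $H_{\mathcal{F}}^{\alpha}$ forces $h=0$ on $L^{\bot}$, which is the first assertion.

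\par It remains to globalize. Because $L^{\bot}$ realizes the maximum of the nonnegative function $\left\|h^{\bot}\right\|$ and that maximum equals $0$, we get $h^{\bot}\equiv0$ on $M$; thus the mean curvature of every leaf of $\mathcal{F}^{\bot}$ vanishes and $\mathrm{div}_{\mathcal{F}}(h^{\bot})\equiv0$. Feeding $h^{\bot}\equiv0$ back into Proposition 2 leaves $n\,\mathrm{div}_{\mathcal{F}^{\bot}}(h)=\sum_{\alpha}\left\|H_{\mathcal{F}}^{\alpha}\right\|^{2}+\sum_{\alpha}\mathrm{Tr}(K^{\alpha}_{ij})$ on each leaf, and rerunning the divergence step (using (3)) forces $\mathrm{Tr}(K^{\alpha}_{ij})=0$ and $h=0$. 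The step I expect to be the real obstacle is exactly the passage from the pointwise inequality to the integral conclusion on a leaf that need not be compact: this is where the hypotheses that $\left\|h^{\bot}\right\|$ is bounded and attained, that $p\geq2$ or the leaves are round circles, and that $M$ is complete, are indispensable, since they are what allow a Hopf/Omori--Yau-type maximum principle to replace the naive use of Stokes' theorem. The surrounding manipulations are routine tensorial bookkeeping.
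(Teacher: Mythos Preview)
Your summed identity and the globalization steps (once $h^{\bot}\equiv 0$) are fine, but the heart of the argument---getting $h=0$ on $L^{\bot}$ when $p\geq 2$---has a real gap. You arrive at $n\,\mathrm{div}_{L^{\bot}}(h)\geq 0$ on a leaf $L^{\bot}$ that need not be compact, and then invoke ``a Hopf/Omori--Yau-type maximum principle'' to conclude the right-hand side vanishes. No such principle applies here: Omori--Yau is about scalar functions bounded above and their Laplacians, not about vector fields with nonnegative divergence; and a Stokes-type vanishing for $\int_{L^{\bot}}\mathrm{div}(h)$ on a complete noncompact leaf would require integrability of $\|h\|$, which you do not have (the hypothesis bounds $\|h^{\bot}\|$, not $\|h\|$). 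So for $p\geq 2$ the argument, as written, does not close.

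The paper avoids this entirely by \emph{not} summing over $\alpha$ and \emph{not} integrating over the leaf. Instead it assumes $h(p)\neq 0$ at some $p\in L^{\bot}$, sets $e_{\alpha}=h/\|h\|$ along the integral curve $\gamma$ of $h$, and reads Proposition~2 in that single direction to obtain
\[
n\,e_{\alpha}(\|h\|)=n\langle \nabla_{e_{\alpha}}h,e_{\alpha}\rangle\geq \|H_{\mathcal{F}}^{\alpha}\|^{2}\geq \tfrac{1}{n}\|h\|^{2},
\]
the last step by Cauchy--Schwarz. This is a Riccati-type differential inequality for $\|h\|$ along $\gamma$: after reparametrizing by arc length one gets $(\,-1/\|h\circ\gamma_{1}\|\,)'\geq \|h(p)\|/n^{2}>0$, forcing $\|h\|$ to blow up in finite arc length---impossible on a complete leaf. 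Hence $h\equiv 0$ on $L^{\bot}$. From there $\langle\nabla_{e_{\alpha}}h,e_{\alpha}\rangle=0$ pointwise on $L^{\bot}$, and Proposition~2 immediately gives $\|h^{\bot}\|=0$ on $L^{\bot}$, then globally (maximum), then $\mathrm{Tr}(K_{ij}^{\alpha})=0$ and, rerunning the ODE argument on any leaf (now that $\mathrm{div}_{\mathcal{F}}(h^{\bot})\equiv 0$ makes hypothesis~(4) automatic), $h\equiv 0$.

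The missing idea in your proposal is precisely this ODE/blow-up mechanism; your divergence inequality is correct but, without compactness, it is not strong enough by itself.
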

\begin{proof}
	Suppose that $h(p)\neq 0$ for some point $p\in L^{\bot}$. With this assumption, let $\gamma:I\longrightarrow L^{\bot}$ be a maximal integral curve of $h$ with $I=[0,b)$ and $\gamma(0)=p$. In a small neighborhood of $p$ we define $e_{\alpha}=h/\left\|h\right\|$. The proposition 2 and the hypothesis implies the following inequality
	\begin{eqnarray*}
		ne_{\alpha}(\left\|h\right\|)=n\left\langle \nabla_{e_{\alpha}}h,e_{\alpha}\right\rangle=\left\|H_{\mathcal{F}}^{\alpha}\right\|^{2}+\mathrm{Tr}(K_{ij}^{\alpha})+\left\|h^{\bot}\right\|^{2}-\mathrm{div}_{\mathcal{F}}(h^{\bot})\geq \left\|H_{\mathcal{F}}^{\alpha}\right\|^{2}
	\end{eqnarray*}
and by the Cauchy-Schwarz inequality we have
	\begin{eqnarray*}
		ne_{\alpha}(\left\|h\right\|)\geq \left\|H_{\mathcal{F}}^{\alpha}\right\|^{2}\geq \left\|h\right\|^{2}/n
	\end{eqnarray*}
therefore
\begin{eqnarray*}
	\left\|h\circ \gamma\right\|'=\left(\left\|h\right\|\circ\gamma\right)'=\gamma'(\left\|h\right\|)=\left\|h\right\|e_{\alpha}(\left\|h\right\|)\geq \left\|h\circ\gamma\right\|^{3}/n^{2}
\end{eqnarray*}
Note that $\gamma$ has infinite length because $\left\|h\right\|$ is increasing along the curve $\gamma$. Then, we can now consider the function $g:[0,+\infty[\longrightarrow\mathbb{R}$ given by
\begin{eqnarray*}
	g=-\frac{1}{\left\|h\circ \gamma_{1}\right\|}
\end{eqnarray*}
 where $\gamma_{1}$ is a reparametrization of $\gamma$ by arc length. Note that
 \begin{eqnarray*}
	 g'=\left\|h\circ\gamma_{1}\right\|'/\left\|h\circ\gamma\right\|^{2}\geq \left\|h\circ\gamma_{1}\right\|/n^{2}\geq\left\|h(p)\right\|/n^{2}
 \end{eqnarray*}
Fixing a positive number $a$, we have 
\begin{eqnarray*}
	\frac{1}{\left\|h\circ \gamma_{1}(a)\right\|}\geq -\frac{1}{\left\|h\circ\gamma_{2}(s)\right\|}+\frac{1}{\left\|h\circ\gamma_{1}(a)\right\|}=g(s)-g(a)\geq \frac{\left\|h(p)\right\|}{n^{2}}(s-a)
\end{eqnarray*}
for all $s>a$ and this is impossible. The conclusion is that $h=0$ on $L^{\bot}$. As a consequence, the mean curvature $h^{\bot}$ of each leaf of $\mathcal{F}^{\bot}$ vanishes, $\mathrm{Tr}(K_{ij}^{\alpha})=0$ and $h=0$. In fact, again by proposition 2, on the leaf $L^{\bot}$ we have
\begin{eqnarray*}
	0=n\left\langle \nabla_{e_{\alpha}}h,e_{\alpha}\right\rangle=\left\|H_{\mathcal{F}}^{\alpha}\right\|^{2}+\mathrm{Tr}(K_{ij}^{\alpha})+\left\|h^{\bot}\right\|^{2}-\mathrm{div}_{\mathcal{F}}(h^{\bot})\geq \left\|h^{\bot}\right\|^{2}
\end{eqnarray*}
It follows that $\left\|h^{\bot}\right\|=0$ on the leaf $L^{\bot}$ where $\left\|h^{\bot}\right\|$ attains a maximum. Then $h^{\bot}=0$ on $M$. Since $\mathrm{div}(h^{\bot})=0$, it also follows that $\mathrm{Tr}(K_{ij})=0$. Finally, let $q$ any point on $M$ and let $L_{q}^{\bot}$ be a leaf of $\mathcal{F}^{\bot}$ passing through the point $q$. We already know that $h=0$ on the leaf $L_{q}^{\bot}$ and then, in particular, $h(q)=0$ $\ \ \square$
\end{proof}
\par The proof of the theorem 3 now follows from the fact that $H_{\mathcal{F}}^{\alpha}=0$ (by proposition 2 and the lemma 1). Then, as a consequence, $\mathcal{F}$ is a totally geodesic foliation. As $\mathcal{F}^{\bot}$ is a totally umbilical foliation and $h^{\bot}=0$, we have that $\mathcal{F}^{\bot}$ is also a totally geodesic foliation.  
\section{The integral formula}
\begin{theorem}
Let $M$ be a closed oriented riemannian manifold and denote by $\mathcal{F}$ and $\mathcal{F}^{\bot}$ two orthogonal foliations of complementary dimensions on $M$. Then 
	\begin{eqnarray*}
\int_{M}\left(e_{\alpha}\left\langle h,e_{\alpha}\right\rangle-\left\|H_{_\mathcal{F}}^{^\alpha}\right\|^2-\mathrm{Tr}(K^{\alpha}_{ij})-\sum\limits_{i=1}^{n}\mathcal{H}_{\bot}^{i,\alpha}-\mathrm{div}_{\mathcal{F}^{\bot}}(\nabla_{e_{\alpha}}e_{\alpha})\right)d\nu=0
\end{eqnarray*}
where $\displaystyle{\mathcal{H}_{\bot}^{i,\alpha}=H_{\mathcal{F^{\bot}}}^{^i}(e_{\alpha},\nabla^{\bot}_{e_{i}}e_{\alpha}-[e_{\alpha},e_{i}]^{\bot})}$.
\end{theorem}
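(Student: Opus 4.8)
The plan is to recognize the asserted identity as nothing more than the integrated form of the main equation of Theorem 1. Observe first that the quantity $e_{\alpha}\langle h,e_{\alpha}\rangle-\|H_{\mathcal{F}}^{\alpha}\|^{2}-\mathrm{Tr}(K_{ij}^{\alpha})$ occurring in the integrand is exactly the left-hand side of Theorem 1. I would therefore begin by substituting the right-hand side of Theorem 1, namely $\sum_{i}\mathcal{H}_{\bot}^{i,\alpha}-\mathrm{div}_{\mathcal{F}}(\nabla_{e_{\alpha}}e_{\alpha})$, in its place. The term $\sum_{i}\mathcal{H}_{\bot}^{i,\alpha}$ produced by this substitution then cancels against the $-\sum_{i}\mathcal{H}_{\bot}^{i,\alpha}$ already present in the integrand, leaving only $-\mathrm{div}_{\mathcal{F}}(\nabla_{e_{\alpha}}e_{\alpha})-\mathrm{div}_{\mathcal{F}^{\bot}}(\nabla_{e_{\alpha}}e_{\alpha})$.

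The second step is the divergence-splitting identity $\mathrm{div}_{\mathcal{F}}(X)+\mathrm{div}_{\mathcal{F}^{\bot}}(X)=\mathrm{div}(X)$, valid for every smooth vector field $X$ on $M$. This follows at once from the definitions in Section 2: since $\{e_{1},\dots,e_{n},e_{n+1},\dots,e_{n+p}\}$ is an orthonormal basis of $T_{x}M$ and the Riemannian divergence of $X$ is the trace of the endomorphism $Y\mapsto\nabla_{Y}X$, splitting this trace over the $\mathcal{F}$-directions and the $\mathcal{F}^{\bot}$-directions yields precisely the two partial divergences. Applying it with $X=\nabla_{e_{\alpha}}e_{\alpha}$ shows that the integrand equals $-\mathrm{div}(\nabla_{e_{\alpha}}e_{\alpha})$, a single total divergence.

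It then remains only to integrate and invoke Stokes' theorem. Since $M$ is closed and oriented, for any smooth vector field $Y$ one has $\mathrm{div}(Y)\,d\nu=\mathcal{L}_{Y}(d\nu)=d(\iota_{Y}\,d\nu)$, whence $\int_{M}\mathrm{div}(Y)\,d\nu=\int_{M}d(\iota_{Y}\,d\nu)=0$. Taking $Y=\nabla_{e_{\alpha}}e_{\alpha}$ gives $\int_{M}\bigl(-\mathrm{div}(\nabla_{e_{\alpha}}e_{\alpha})\bigr)\,d\nu=0$, which is exactly the claimed formula.

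The one point requiring care, and the main obstacle, is global well-definedness. The adapted frame, and with it $e_{\alpha}$ and the field $\nabla_{e_{\alpha}}e_{\alpha}$, is only defined locally, whereas the divergence theorem as used above needs a globally defined smooth vector field $Y$. I would resolve this by fixing at the outset a globally defined unit vector field $e_{\alpha}$ tangent to $\mathcal{F}^{\bot}$ (which exists, for instance, when $p=1$ with $M$ and $\mathcal{F}^{\bot}$ oriented, so that $\nabla_{e_{\alpha}}e_{\alpha}$ is a genuine global field) and then completing it to an adapted frame only locally to carry out the pointwise computation; alternatively, one observes that the integrand is a globally well-defined function which is locally a total divergence, so that its integral vanishes by a partition-of-unity argument. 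Checking that the cancellation and the trace-splitting identity are independent of how the frame is completed is then routine.
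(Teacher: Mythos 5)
Your proposal is correct and takes essentially the same route as the paper's one-line proof: substitute the identity of Theorem 1 into the integrand so the $\sum_{i}\mathcal{H}_{\bot}^{i,\alpha}$ terms cancel, use the splitting $\mathrm{div}_{\mathcal{F}}(X)+\mathrm{div}_{\mathcal{F}^{\bot}}(X)=\mathrm{div}(X)$ with $X=\nabla_{e_{\alpha}}e_{\alpha}$, and integrate the resulting total divergence to zero over the closed oriented manifold. Your closing remarks on the global well-definedness of $e_{\alpha}$ and $\nabla_{e_{\alpha}}e_{\alpha}$ supply a point of rigor the paper leaves implicit, but they do not change the argument.
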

\begin{proof}
The result is a consequence of 
\begin{eqnarray*}
	\mathrm{div}_{\mathcal{F}}(\nabla_{e_{\alpha}}e_{\alpha})=\mathrm{div}(\nabla_{e_{\alpha}}e_{\alpha})-\mathrm{div}_{\mathcal{F}^{\bot}}(\nabla_{e_{\alpha}}e_{\alpha})
\end{eqnarray*}
 and of theorem 1  $\ \square$
\end{proof}
We will now use the following notations and definitions of the reference \cite{Walczak1990}
\begin{eqnarray*}
	B_{\mathcal{F}}(e_{i},e_{j})\!\!\!& = &\!\!\!\left(\nabla_{e_{i}}e_{j}\right)^{\bot}\\
	B_{\mathcal{F}^{\bot}}(e_{\alpha},e_{\beta})\!\!\!& = &\!\!\!\left(\nabla_{e_{\alpha}}e_{\beta}\right)^{\top}\\
	K(\mathcal{F},\mathcal{F}^{\bot})\!\!\!& = &\!\!\!\sum\limits_{i,\alpha}R(e_{\alpha},e_{i},e_{i},e_{\alpha})
\end{eqnarray*}
As a corollary of theorem 5, we have the following theorem (proved in \cite{Walczak1990})
\begin{theorem}
	Let $M$ be a closed oriented riemannian manifold and denote by $\mathcal{F}$ and $\mathcal{F}^{\bot}$ two orthogonal foliations of complementary dimensions on $M$. Then
	\begin{eqnarray*}
	\int_{M}\left(K(\mathcal{F},\mathcal{F}^{\bot})+\|B_{\mathcal{F}}\|^2+\|B_{\mathcal{F}^{\bot}}\|^2-\|h\|^2-\|h^{\bot}\|^2\right)d\nu=0.	
	\end{eqnarray*}
\end{theorem}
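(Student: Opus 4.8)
The plan is to derive Theorem 6 from Theorem 5 by summing the integrand of the latter over $\alpha=n+1,\dots,n+p$ and then identifying each of the resulting traced terms with one of the quantities in the statement. Summing Theorem 5 gives
\begin{eqnarray*}
\int_M\left(\sum_\alpha e_\alpha\langle h,e_\alpha\rangle-\sum_\alpha\|H_{\mathcal{F}}^{\alpha}\|^2-\sum_\alpha\mathrm{Tr}(K^\alpha_{ij})\right. & & \\
\left.{}-\sum_{i,\alpha}\mathcal{H}_{\bot}^{i,\alpha}-\sum_\alpha\mathrm{div}_{\mathcal{F}^{\bot}}(\nabla_{e_\alpha}e_\alpha)\right)d\nu & = & 0,
\end{eqnarray*}
so it suffices to match this against $\int_M(K(\mathcal{F},\mathcal{F}^{\bot})+\|B_{\mathcal{F}}\|^2+\|B_{\mathcal{F}^{\bot}}\|^2-\|h\|^2-\|h^{\bot}\|^2)\,d\nu$.

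First I would dispose of the three terms that match pointwise. The curvature term is immediate: $\sum_\alpha\mathrm{Tr}(K^\alpha_{ij})=\sum_{i,\alpha}R(e_\alpha,e_i,e_i,e_\alpha)=K(\mathcal{F},\mathcal{F}^{\bot})$. For $\mathcal{F}$, the identity $\langle\nabla_{e_i}e_\alpha,e_j\rangle=-\langle\nabla_{e_i}e_j,e_\alpha\rangle$ gives $\sum_\alpha\|H_{\mathcal{F}}^{\alpha}\|^2=\sum_{i,j}|(\nabla_{e_i}e_j)^{\bot}|^2=\|B_{\mathcal{F}}\|^2$. The term $\sum_{i,\alpha}\mathcal{H}_{\bot}^{i,\alpha}$ is the analogue, in the general setting, of equation (8): using $A_{e_i}(e_\alpha)=-(\nabla_{e_\alpha}e_i)^{\bot}$ together with $\nabla^{\bot}_{e_i}e_\alpha-[e_\alpha,e_i]^{\bot}=2\nabla^{\bot}_{e_i}e_\alpha-(\nabla_{e_\alpha}e_i)^{\bot}$, one expands each summand as $|A_{e_i}(e_\alpha)|^2$ minus a cross term; after summation the cross term vanishes, because it pairs the symmetric form $H^i_{\mathcal{F}^{\bot}}(e_\alpha,e_\beta)=\langle\nabla_{e_\alpha}e_\beta,e_i\rangle$ (symmetric in $\alpha,\beta$ since $\mathcal{F}^{\bot}$ is integrable) against the antisymmetric coefficients $\langle\nabla_{e_i}e_\alpha,e_\beta\rangle$. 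What survives is $\sum_i\|H^i_{\mathcal{F}^{\bot}}\|^2=\|B_{\mathcal{F}^{\bot}}\|^2$.

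The two remaining terms I would treat with the divergence theorem applied to the globally defined fields $h$ and $h^{\bot}$, for which $\int_M\mathrm{div}(h)\,d\nu=\int_M\mathrm{div}(h^{\bot})\,d\nu=0$. Since $h\in T\mathcal{F}^{\bot}$ and $h=\sum_i(\nabla_{e_i}e_i)^{\bot}$, one gets $\mathrm{div}_{\mathcal{F}}(h)=-\|h\|^2$, hence $\int_M\mathrm{div}_{\mathcal{F}^{\bot}}(h)\,d\nu=\int_M\|h\|^2\,d\nu$; likewise $\mathrm{div}_{\mathcal{F}^{\bot}}(h^{\bot})=-\|h^{\bot}\|^2$. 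Writing $\tau:=\sum_\alpha\nabla_{e_\alpha}e_\alpha=h^{\bot}+\tau^{\bot}$ with $\tau^{\bot}:=\sum_\alpha(\nabla_{e_\alpha}e_\alpha)^{\bot}\in T\mathcal{F}^{\bot}$, and using $\langle h,\tau^{\bot}\rangle=-\mathrm{div}_{\mathcal{F}}(\tau^{\bot})$, a direct calculation yields
\[
\sum_\alpha e_\alpha\langle h,e_\alpha\rangle-\sum_\alpha\mathrm{div}_{\mathcal{F}^{\bot}}(\nabla_{e_\alpha}e_\alpha)=\mathrm{div}_{\mathcal{F}^{\bot}}(h)+\|h^{\bot}\|^2-\mathrm{div}(\tau^{\bot}).
\]
Integrating, the first term contributes $\int_M\|h\|^2\,d\nu$ and the second $\int_M\|h^{\bot}\|^2\,d\nu$, so once $\int_M\mathrm{div}(\tau^{\bot})\,d\nu=0$ is known, substituting the three matched terms into the summed Theorem 5 produces exactly the formula of Theorem 6.

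The hard part will be the vanishing $\int_M\mathrm{div}(\tau^{\bot})\,d\nu=0$, since $\tau^{\bot}$, in contrast to $h$ and $h^{\bot}$, is not manifestly independent of the chosen adapted frame, so the divergence theorem cannot be invoked for it directly. I would settle this by writing $\mathrm{div}(\tau^{\bot})=\mathrm{div}(\tau)-\mathrm{div}(h^{\bot})$: the field $h^{\bot}$ is global, so $\int_M\mathrm{div}(h^{\bot})\,d\nu=0$, while $\int_M\mathrm{div}(\tau)\,d\nu=0$ is exactly what the proof of Theorem 5 delivers, its integrand for each $\alpha$ being $-\mathrm{div}(\nabla_{e_\alpha}e_\alpha)$. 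Equivalently, one checks that a change of adapted frame alters $\tau^{\bot}$ only by a vector field $C\in T\mathcal{F}^{\bot}$ whose net contribution to the integrand is the total divergence $-\mathrm{div}(C)$, confirming that $\int_M\mathrm{div}(\tau^{\bot})\,d\nu$ is well defined and zero. Apart from this frame-dependence point, the argument is just the bookkeeping of traces together with two applications of the divergence theorem.
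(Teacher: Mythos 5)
Your argument is correct, but it follows a genuinely different path from the paper's, in two respects. For the identification $\sum_{i,\alpha}\mathcal{H}_{\bot}^{i,\alpha}=\|B_{\mathcal{F}^{\bot}}\|^{2}$ (the paper's equation (13)), the paper expands everything in the adapted frame, arrives at $\|B_{\mathcal{F}}\|^{2}+\|B_{\mathcal{F}^{\bot}}\|^{2}-\sum_{i,\alpha}(\lambda_{i}^{\alpha})^{2}$, and cancels against its equation (12); your version, using $\nabla^{\bot}_{e_{i}}e_{\alpha}-[e_{\alpha},e_{i}]^{\bot}=2\nabla^{\bot}_{e_{i}}e_{\alpha}-(\nabla_{e_{\alpha}}e_{i})^{\bot}$ and killing the cross term by pairing the symmetric $H^{i}_{\mathcal{F}^{\bot}}$ against the antisymmetric coefficients $\langle\nabla_{e_{i}}e_{\alpha},e_{\beta}\rangle$, is shorter and makes visible exactly where integrability of $\mathcal{F}^{\bot}$ enters (one small slip: the cross term enters with a plus sign, not a minus, but it vanishes either way). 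For the divergence bookkeeping, the paper, despite announcing Theorem 6 as a corollary of Theorem 5, in fact returns to Theorem 1: it converts $\sum_{\alpha}\mathrm{div}_{\mathcal{F}}(\nabla_{e_{\alpha}}e_{\alpha})$ into global quantities via its Lemma 2 together with $\mathrm{div}_{\mathcal{F}^{\bot}}(h)=\mathrm{div}(h)+\|h\|^{2}$ and $\mathrm{div}_{\mathcal{F}}(h^{\bot})=\mathrm{div}(h^{\bot})+\|h^{\bot}\|^{2}$, obtaining a pointwise identity all of whose terms are frame-independent, so the divergence theorem is invoked only for the global fields $h$ and $h^{\bot}$. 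You instead take Theorem 5 literally, which forces you to handle $\int_{M}\mathrm{div}(\tau^{\bot})\,d\nu$ for the frame-dependent field $\tau^{\bot}$; your displayed identity $\sum_{\alpha}e_{\alpha}\langle h,e_{\alpha}\rangle-\sum_{\alpha}\mathrm{div}_{\mathcal{F}^{\bot}}(\nabla_{e_{\alpha}}e_{\alpha})=\mathrm{div}_{\mathcal{F}^{\bot}}(h)+\|h^{\bot}\|^{2}-\mathrm{div}(\tau^{\bot})$ checks out, the reduction $\mathrm{div}(\tau^{\bot})=\mathrm{div}(\tau)-\mathrm{div}(h^{\bot})$ is sound, and your frame-change remark (a shift of $\tau^{\bot}$ by $C\in T\mathcal{F}^{\bot}$ changes the integrand by $-\mathrm{div}(C)$, since $\mathrm{div}_{\mathcal{F}}(C)=-\langle h,C\rangle$) addresses a point the paper glosses over entirely, as its Theorem 5 already integrates $\mathrm{div}(\nabla_{e_{\alpha}}e_{\alpha})$ for a merely locally defined $e_{\alpha}$. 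Be aware, though, that by Theorem 1 the summed integrand of Theorem 5 is pointwise equal to $-\mathrm{div}(\tau)$, so invoking both the summed Theorem 5 and $\int_{M}\mathrm{div}(\tau)\,d\nu=0$ is using the same fact twice; this makes your route redundant rather than circular, since the divergence theorem supplies it independently. In sum: the paper's route buys a frame-safe pointwise identity before any integration, while yours buys a cleaner proof of (13) and an explicit justification of the frame-dependence issue.
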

To prove the theorem 6, we first observe that
\begin{eqnarray*}
			B_{\mathcal{F}}(e_{i},e_{j})=(\nabla_{e_{i}} e_{j})^{\bot}=\sum\limits_{\alpha}\left\langle \nabla_{e_{i}}e_{j},e_{\alpha}\right\rangle e_{\alpha}=\sum\limits_{\alpha}H_{\mathcal{F}}^{\alpha}(e_{i},e_{j})e_{\alpha}
\end{eqnarray*}
and then we obtain
\begin{eqnarray}
	\left\|B_{\mathcal{F}}\right\|^{2}=\sum\limits_{i,j}\left\langle (\nabla_{e_{i}}e_{j})^{\bot},(\nabla_{e_{j}}e_{i})^{\bot}\right\rangle=\sum\limits_{\alpha}\left\|H_{\mathcal{F}}^{\alpha}\right\|^{2}=\sum\limits_{i,\alpha}(\lambda_{i}^{\alpha})^{2}
\end{eqnarray}
On the other hand, by equation (5) and the definition of $H_{\mathcal{F}^{\bot}}^{i}$, we have
\begin{eqnarray*}
	-H_{\mathcal{F}^{\bot}}^{i}([e_{\alpha},e_{i}]^{\bot},e_{\alpha})\!\!\!&=&\!\!\!\left\langle \nabla_{[e_{\alpha},e_{i}]}e_{i},e_{\alpha}\right\rangle-(\lambda_{i}^{\alpha})^{2}\\
	H_{\mathcal{F}^{\bot}}^{i}(e_{\alpha},\nabla_{e_{i}}^{\bot}e_{\alpha})\!\!\!&=&\!\!\!-\left\langle \nabla_{e_{\alpha}}e_{i},\nabla_{e_{
	i}}^{\bot}e_{\alpha}\right\rangle
\end{eqnarray*}
Adding the above equations we obtain
\begin{eqnarray*}
	H_{\mathcal{F}^{\bot}}^{i}(e_{\alpha},\nabla_{e_{i}}^{\bot}e_{\alpha}-[e_{\alpha},e_{i}]^{\bot} )=-(\left\langle \nabla_{e_{\alpha}}e_{i},\nabla_{e_{i}}e_{\alpha}\right\rangle-\left\langle \nabla_{[e_{\alpha},e_{i}]}e_{i},e_{\alpha}\right\rangle)-(\lambda_{i}^{\alpha})^{2}
\end{eqnarray*}
We observe now that
\begin{eqnarray*}
	\left\langle \nabla_{e_{\alpha}}e_{i},\nabla_{e_{i}}e_{\alpha}\right\rangle\!\!\!&-&\!\!\!\left\langle \nabla_{[e_{\alpha},e_{i}]}e_{i},e_{\alpha}\right\rangle=\left\langle \nabla_{e_{i}}e_{\alpha},\nabla_{e_{\alpha}}e_{i}\right\rangle+\left\langle \nabla_{[e_{\alpha},e_{i}]}e_{\alpha},e_{i}\right\rangle\\
	& = &\!\!\!\left\langle \nabla_{e_{i}}e_{\alpha},\nabla_{e_{\alpha}}e_{i}\right\rangle-\left\langle \nabla_{\nabla_{e_{i}}e_{\alpha}}e_{\alpha},e_{i}\right\rangle+\left\langle \nabla_{\nabla_{e_{\alpha}}e_{i}}e_{\alpha},e_{i}\right\rangle\\
	& = &\!\!\!\sum\limits_{j}\left\langle \nabla_{e_{i}}e_{\alpha},e_{j}\right\rangle\left\langle \nabla_{e_{\alpha}}e_{i},e_{j}\right\rangle+\sum\limits_{\beta}\left\langle\nabla_{e_{i}}e_{\alpha},e_{\beta}\right\rangle\left\langle\nabla_{e_{\alpha}}e_{i},e_{\beta}\right\rangle\\
	& - &\!\!\!\sum\limits_{j}\left\langle \nabla_{e_{i}}e_{\alpha},e_{j}\right\rangle\left\langle \nabla_{e_{j}}e_{\alpha},e_{i}\right\rangle-\sum\limits_{\beta}\left\langle\nabla_{e_{i}}e_{\alpha},e_{\beta}\right\rangle\left\langle\nabla_{e_{\beta}}e_{\alpha},e_{i}\right\rangle\\
	& + &\!\!\!\sum\limits_{j}\left\langle \nabla_{e_{\alpha}}e_{i},e_{j}\right\rangle\left\langle \nabla_{e_{j}}e_{\alpha},e_{i}\right\rangle+\sum\limits_{\beta}\left\langle\nabla_{e_{\alpha}}e_{i},e_{\beta}\right\rangle\left\langle\nabla_{e_{\beta}}e_{\alpha},e_{i}\right\rangle
\end{eqnarray*}
and then
\begin{eqnarray}
	\nonumber\sum\limits_{i,\alpha} \!\!\!\!\!&H_{\mathcal{F^{\bot}}}^{^i}&\!\!\!\!\!(e_{\alpha},\nabla^{\bot}_{e_{i}}e_{\alpha}-[e_{\alpha},e_{i}]^{\bot})\\
	\nonumber&=&\!\!\!\!\!\sum\limits_{i,\alpha}\left[-(\left\langle \nabla_{e_{\alpha}}e_{i},\nabla_{e_{i}}e_{\alpha}\right\rangle-\left\langle \nabla_{[e_{\alpha},e_{i}]}e_{i},e_{\alpha}\right\rangle)-(\lambda_{i}^{\alpha})^{2}\right]\\
	\nonumber&=&\!\!\!-\sum\limits_{i,j,\alpha}[\left\langle \nabla_{e_{i}}e_{\alpha},e_{j}\right\rangle\left\langle \nabla_{e_{\alpha}}e_{i},e_{j}\right\rangle+\left\langle \nabla_{e_{\alpha}}e_{i},e_{j}\right\rangle\left\langle \nabla_{e_{j}}e_{\alpha},e_{i}\right\rangle\\
	\nonumber&-&\!\!\!\!\!\left\langle e_{\alpha},\nabla_{e_{i}}e_{j}\right\rangle\left\langle e_{\alpha},\nabla_{e_{j}}e_{i}\right\rangle]\\
	\nonumber&-&\!\!\!\!\!\sum\limits_{i,\alpha,\beta}[\left\langle\nabla_{e_{i}}e_{\alpha},e_{\beta}\right\rangle\left\langle\nabla_{e_{\alpha}}e_{i},e_{\beta}\right\rangle+\left\langle\nabla_{e_{i}}e_{\alpha},e_{\beta}\right\rangle\left\langle e_{\alpha},\nabla_{e_{\beta}}e_{i}\right\rangle\\
	\nonumber&-&\!\!\!\!\!\left\langle e_{i},\nabla_{e_{\alpha}}e_{\beta}\right\rangle\left\langle\nabla_{e_{\beta}}e_{\alpha},e_{i}\right\rangle]-\sum\limits_{i,\alpha}(\lambda_{i}^{\alpha})^{2}\\
	\nonumber&=&\!\!\!\!\!\sum\limits_{i,j,\alpha}\left\langle e_{\alpha},\nabla_{e_{i}}e_{j}\right\rangle\left\langle e_{\alpha},\nabla_{e_{j}}e_{i}\right\rangle+\sum\limits_{i,\alpha,\beta}\left\langle e_{i},\nabla_{e_{\alpha}}e_{\beta}\right\rangle\left\langle\nabla_{e_{\beta}}e_{\alpha},e_{i}\right\rangle-\sum\limits_{i,\alpha}(\lambda_{i}^{\alpha})^{2}\\
	\nonumber&=&\!\!\!\!\!\sum\limits_{i,j}\left\langle (\nabla_{e_{i}}e_{j})^{\bot},(\nabla_{e_{j}}e_{i})^{\bot}\right\rangle+\sum\limits_{\alpha,\beta}\left\langle (\nabla_{e_{\alpha}}e_{\beta})^{\top},(\nabla_{e_{\beta}}e_{\alpha})^{\top}\right\rangle-\sum\limits_{i,\alpha}(\lambda_{i}^{\alpha})^{2}\\
	\nonumber&=&\!\!\!\!\!\left\|B_{\mathcal{F}}\right\|^{2}+\left\|B_{\mathcal{F}^{\bot}}\right\|^{2}-\sum\limits_{i,\alpha}(\lambda_{i}^{\alpha})^{2}\\
	&=&\!\!\!\!\!\left\|B_{\mathcal{F}^{\bot}}\right\|^{2}
\end{eqnarray}
\begin{lemma}
	We have the folowing equality
	\begin{eqnarray*}
\sum\limits_{\alpha}\mathrm{div}_{\mathcal{F}}(\nabla_{e_{\alpha}}e_{\alpha})\!\!\!&=&\!\!\!\mathrm{div}_{\mathcal{F}}(h^{\bot})-\sum\limits_{\alpha}\left\langle h,\nabla_{e_{\alpha}}e_{\alpha}\right\rangle
\end{eqnarray*}
\end{lemma}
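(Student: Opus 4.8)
The plan is to expand the left-hand side from the definition $\mathrm{div}_{\mathcal{F}}(X)=\sum_i\langle e_i,\nabla_{e_i}X\rangle$ with $X=\nabla_{e_\alpha}e_\alpha$, obtaining
\[
\sum_{\alpha}\mathrm{div}_{\mathcal{F}}(\nabla_{e_\alpha}e_\alpha)=\sum_{i,\alpha}\langle\nabla_{e_i}\nabla_{e_\alpha}e_\alpha,e_i\rangle,
\]
and then to apply the Leibniz rule $\langle\nabla_{e_i}\nabla_{e_\alpha}e_\alpha,e_i\rangle=e_i\langle\nabla_{e_\alpha}e_\alpha,e_i\rangle-\langle\nabla_{e_\alpha}e_\alpha,\nabla_{e_i}e_i\rangle$ to every summand, splitting the sum into a \emph{derivative} group and a \emph{connection} group.

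First I would treat the derivative group. Because each $e_i$ is tangent to $\mathcal{F}$, only the $\top$-component of $\nabla_{e_\alpha}e_\alpha$ survives in $\langle\nabla_{e_\alpha}e_\alpha,e_i\rangle$; summing over $\alpha$ and using the definition $h^\bot=\sum_\alpha(\nabla_{e_\alpha}e_\alpha)^\top$ collapses this into $e_i\langle h^\bot,e_i\rangle$. A second application of the Leibniz rule gives $\sum_i e_i\langle h^\bot,e_i\rangle=\mathrm{div}_{\mathcal{F}}(h^\bot)+\sum_i\langle h^\bot,\nabla_{e_i}e_i\rangle$, and since $h^\bot\in T\mathcal{F}$ the trailing sum equals $\langle h^\bot,\sum_i(\nabla_{e_i}e_i)^\top\rangle$.

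Next I would treat the connection group $-\sum_{i,\alpha}\langle\nabla_{e_\alpha}e_\alpha,\nabla_{e_i}e_i\rangle$. Decomposing both factors into their $\top$- and $\bot$-parts and summing, the definitions $\sum_\alpha(\nabla_{e_\alpha}e_\alpha)^\top=h^\bot$ and $\sum_i(\nabla_{e_i}e_i)^\bot=h$ turn this group into $-\langle h^\bot,\sum_i(\nabla_{e_i}e_i)^\top\rangle-\langle\sum_\alpha(\nabla_{e_\alpha}e_\alpha)^\bot,h\rangle$. The first of these is precisely the leftover produced above but with opposite sign, so the two cancel; and since $h\in T\mathcal{F}^\bot$ the last term is $\sum_\alpha\langle h,\nabla_{e_\alpha}e_\alpha\rangle$. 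Assembling the two groups yields the asserted identity.

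The only real subtlety, and the step I expect to require the most care, is this bookkeeping of the tangent/normal decomposition: one must recognize that the mixed term $\langle h^\bot,\sum_i(\nabla_{e_i}e_i)^\top\rangle$ is generated with opposite signs by the two halves of the Leibniz expansion and therefore drops out. Everything else reduces to the linearity of the orthogonal projections together with the facts $h^\bot\in T\mathcal{F}$ and $h\in T\mathcal{F}^\bot$, which let the inner products be replaced by the full covariant derivatives in the final expression.
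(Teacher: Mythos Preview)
Your proposal is correct and follows essentially the same route as the paper: both expand $\mathrm{div}_{\mathcal{F}}(\nabla_{e_\alpha}e_\alpha)$ via the Leibniz rule and then use the tangent/normal decomposition together with the definitions of $h$ and $h^{\bot}$ to identify the pieces. The only cosmetic difference is organizational: the paper computes $\sum_\alpha\mathrm{div}_{\mathcal{F}}(\nabla_{e_\alpha}e_\alpha)$ and $\mathrm{div}_{\mathcal{F}}(h^{\bot})$ separately (its equations (14) and (15)) and then subtracts, whereas you run the computation in a single pass and watch the mixed term $\langle h^{\bot},\sum_i(\nabla_{e_i}e_i)^{\top}\rangle$ cancel directly.
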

\begin{proof}
By definition
\begin{eqnarray*}
	&\mathrm{div}_{\mathcal{F}}&\!\!\!\!\!(\nabla_{e_{\alpha}}e_{\alpha})=\sum\limits_{i}\left\langle e_{i},\nabla_{e_{i}}(\nabla_{\alpha}e_{\alpha})\right\rangle\\
	&=&\!\!\!\!\!\sum\limits_{i}\left[e_{i}\left\langle e_{i},\nabla_{e_{\alpha}}e_{\alpha}\right\rangle-\left\langle \nabla_{e_{i}}e_{i},\nabla_{e_{\alpha}}e_{\alpha}\right\rangle\right]\\
	&=&\!\!\!\!\!\sum\limits_{i}e_{i}\left\langle e_{i},\nabla_{e_{\alpha}}e_{\alpha}\right\rangle-\sum\limits_{i}\left\langle \sum\limits_{j}\left\langle \nabla_{e_{i}}e_{i},e_{j}\right\rangle e_{j}+\sum\limits_{\beta}\left\langle \nabla_{e_{i}}e_{i},e_{\beta}\right\rangle e_{\beta},\nabla_{e_{\alpha}}e_{\alpha}\right\rangle\\
	&=&\!\!\!\!\!\sum\limits_{i}e_{i}\left\langle e_{i},\nabla_{e_{\alpha}}e_{\alpha}\right\rangle-\sum\limits_{j}\left\langle \nabla_{\alpha}e_{\alpha},e_{j}\right\rangle\left\langle \sum\limits_{i}\nabla_{e_{i}}e_{i},e_{j}\right\rangle-\left\langle h,\nabla_{e_{\alpha}}e_{\alpha}\right\rangle\\
	&=&\!\!\!\!\!\sum\limits_{i}e_{i}\left\langle e_{i},\nabla_{e_{\alpha}}e_{\alpha}\right\rangle-\left\langle \sum\limits_{i}\nabla_{e_{i}}e_{i},\sum\limits_{j}\left\langle \nabla_{\alpha}e_{\alpha},e_{j}\right\rangle e_{j}\right\rangle-\left\langle h,\nabla_{e_{\alpha}}e_{\alpha}\right\rangle
\end{eqnarray*}
and then 
\begin{eqnarray}
	\sum\limits_{\alpha}\mathrm{div}_{\mathcal{F}}(\nabla_{e_{\alpha}}e_{\alpha})=\sum\limits_{i,\alpha}e_{i}\left\langle e_{i},\nabla_{e_{\alpha}}e_{\alpha}\right\rangle-\left\langle h^{\bot}, \sum\limits_{i}\nabla_{e_{i}}e_{i}\right\rangle-\sum\limits_{\alpha}\left\langle h,\nabla_{e_{\alpha}}e_{\alpha}\right\rangle
\end{eqnarray}
Analogously, we have
\begin{eqnarray}
	\mathrm{div}_{\mathcal{F}}(h^{\bot})=\sum\limits_{i,\alpha}e_{i}\left\langle e_{i},\nabla_{e_{\alpha}}e_{\alpha}\right\rangle-\left\langle h^{\bot}, \sum\limits_{i}\nabla_{e_{i}}e_{i}\right\rangle
\end{eqnarray}
Substituting (15) in (14) we obtain the desired equality $\ \ \square$
\end{proof}
\par Now, we proceed the proof of the theorem 6. The equation of theorem 1 is
\begin{eqnarray*}
e_{\alpha}\left\langle h,e_{\alpha}\right\rangle-\left\|H_{_\mathcal{F}}^{^\alpha}\right\|^2-\mathrm{Tr}(K^{\alpha}_{ij})=\sum\limits_{i=1}^{n}H_{\mathcal{F^{\bot}}}^{^i}(e_{\alpha},\nabla^{\bot}_{e_{i}}e_{\alpha}-[e_{\alpha},e_{i}]^{\bot})-\mathrm{div}_{\mathcal{F}}(\nabla_{e_{\alpha}}e_{\alpha})
\end{eqnarray*}
Summing (in $\alpha$) all these equations, using the relations (12)-(13) and also the definiton of $K=K(\mathcal{F},\mathcal{F}^{\bot})$, we can write
\begin{eqnarray*}
\sum\limits_{\alpha} e_{\alpha}\left\langle h,e_{\alpha}\right\rangle-\left\|B_{\mathcal{F}}\right\|^{2}-K=\left\|B_{\mathcal{F}^{\bot}}\right\|^{2}-\sum\limits_{\alpha}\mathrm{div}_{\mathcal{F}}(\nabla_{e_{\alpha}}e_{\alpha})
\end{eqnarray*}
Using now the following elementary equations (see \cite{Walczak1990})
\begin{eqnarray*}
	\mathrm{div}_{\mathcal{F}^{\bot}}(h)=\mathrm{div}(h)+\left\|h\right\|^{2} \ \ \ \mathrm{and} \ \ \ \mathrm{div}_{\mathcal{F}}(h^{\bot})=\mathrm{div}(h^{\bot})+\left\|h^{\bot}\right\|^{2}
\end{eqnarray*}
 and using the lemma 2, we obtain  
\begin{eqnarray*}
	\sum\limits_{\alpha} e_{\alpha}\left\langle h,e_{\alpha}\right\rangle\!\!\!&-&\!\!\!\left\|B_{\mathcal{F}}\right\|^{2}-K=\left\|B_{\mathcal{F}^{\bot}}\right\|^{2}-\left(\mathrm{div}_{\mathcal{F}}(h^{\bot})-\sum\limits_{\alpha}\left\langle h,\nabla_{e_{\alpha}}e_{\alpha}\right\rangle\right)\\
	&=&\!\!\!\left\||B_{\mathcal{F}^{\bot}}\right\|^{2}-\mathrm{div}_{\mathcal{F}}(h^{\bot})+\sum\limits_{\alpha}\left\langle h,\nabla_{e_{\alpha}}e_{\alpha}\right\rangle\\
	&=&\!\!\!\left\|B_{\mathcal{F}^{\bot}}\right\|^{2}-\mathrm{div}_{\mathcal{F}}(h^{\bot})+\sum\limits_{\alpha}e_{\alpha}\left\langle h,e_{\alpha}\right\rangle-\sum\limits_{\alpha}\left\langle e_{\alpha},\nabla_{e_{\alpha}}h\right\rangle\\
	&=&\!\!\!\left\|B_{\mathcal{F}^{\bot}}\right\|^{2}-\mathrm{div}_{\mathcal{F}}(h^{\bot})+\sum\limits_{\alpha}e_{\alpha}\left\langle h,e_{\alpha}\right\rangle-\mathrm{div}_{\mathcal{F}^{\bot}}(h)\\
	&=&\!\!\!\left\|B_{\mathcal{F}^{\bot}}\right\|^{2}-\mathrm{div}_{\mathcal{F}}(h^{\bot})+\sum\limits_{\alpha}e_{\alpha}\left\langle h,e_{\alpha}\right\rangle-\mathrm{div}(h)-\left\|h\right\|^{2}\\
	&=&\!\!\!\left\|B_{\mathcal{F}^{\bot}}\right\|^{2}-\mathrm{div}(h^{\bot})-\left\|h^{\bot}\right\|+\sum\limits_{\alpha}e_{\alpha}\left\langle h,e_{\alpha}\right\rangle-\mathrm{div}(h)-\left\|h\right\|^{2}
\end{eqnarray*}
and we conclude that
\begin{eqnarray*}
	-\left\|B_{\mathcal{F}}\right\|^{2}-K=\left\|B_{\mathcal{F}^{\bot}}\right\|^{2}-\mathrm{div}(h^{\bot})-\left\|h^{\bot}\right\|-\mathrm{div}(h)-\left\|h\right\|^{2}
\end{eqnarray*}
\par The theorem 6 will follow from the integration of the above equality on $M$.

\section{Final remarks}
\begin{enumerate}
	\item The theorem 6 was obtained first by Ranjan \cite{Ranjan1986} and after generalized by Walczak \cite{Walczak1990} for the case of two distributions.
	\item It is also possible to study the completeness of the foliations using the theorem 1. We will do it in a forthcoming paper.
\end{enumerate}


\end{document}